 \newtheorem{thm}{Theorem}[section]
 \theoremstyle{definition}
 \theoremstyle{remark}
 \numberwithin{equation}{section}
\newcommand{\wh}{\widehat}
\newcommand{\wt}{\widetilde}
\newcommand{\wc}{\check}
\newcommand{\D}{\mathbb D}
\title{Generalization of the Brauer Theorem to \\ Matrix Polynomials and Matrix Laurent Series\thanks{Work supported by Gruppo Nazionale di Calcolo Scientifico (GNCS) of INdAM, and by the PRA project
``Mathematical models and computational methods for complex networks'' funded by the University of Pisa.}}
\author{D.A. Bini, B. Meini\\
Dipartimento di Matematica, Universit\`a di Pisa\\ Largo Bruno Pontecorvo, 5, 56127 Pisa, Italy\\
      {\tt bini, meini@dm.unipi.it}}
\begin{document}

\maketitle

\begin{abstract}
Given a square matrix $A$, Brauer's theorem [Duke Math. J. 19 (1952),
  75--91] shows how to modify one single eigenvalue of $A$ via a
rank-one perturbation, without changing any of the remaining
eigenvalues.  We reformulate Brauer's theorem in functional form and
provide extensions to matrix polynomials and to matrix Laurent
series $A(z)$ together with generalizations to shifting a set of
eigenvalues. We provide conditions under which the modified function
$\wt A(z)$ has a canonical factorization $\wt A(z)=\wt U(z)\wt L(z^{-1})$
and we provide explicit expressions of the factors $\wt U(z)$ and $\wt
L(z)$. Similar conditions and expressions are given for the
factorization of $\wt A(z^{-1})$. Some applications are discussed.
\end{abstract}

\section{Introduction}
Brauer's theorem \cite{brauer} relates the eigenvalues of an $n\times
n$ matrix $A$ to the eigenvalues of the modified matrix $\wt A=A+
uv^*$ when either $u$ or $v$ coincides with a right or with a left
eigenvector of $A$. Its original formulation can be stated in the
following form.

\begin{thm}\label{th:brauer}
Let $A$ be an $n\times n$ complex matrix with eigenvalues
$\lambda_1,\ldots,\lambda_n$.  Let $u_k$ be an eigenvector of $A$
associated with the eigenvalue $\lambda_k$, and let $v$ be any
$n$-dimensional vector. Then the matrix $\wt A=A + u_k v^*$ has eigenvalues
$\lambda_1,\ldots,\lambda_{k-1},\lambda_k+v^*u_k,\lambda_{k+1},\ldots,\lambda_n$.
\end{thm}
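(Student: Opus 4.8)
The plan is to exploit the Jordan (or Schur) structure of $A$ adapted to the eigenvector $u_k$. First I would choose a nonsingular matrix $S=[u_k\ s_2\ \cdots\ s_n]$ whose first column is the eigenvector $u_k$ and whose remaining columns extend it to a basis bringing $A$ to a convenient form; concretely, taking $S$ so that $S^{-1}AS$ is upper triangular (Schur-like) with diagonal entries $\lambda_k,\lambda_2,\ldots,\lambda_n$ arranged so that $\lambda_k$ sits in the $(1,1)$ position. Since similarity transformations preserve the spectrum, it suffices to analyze $S^{-1}\wt A S = S^{-1}AS + (S^{-1}u_k)(v^*S)$. The key observation is that $S^{-1}u_k = e_1$, the first standard basis vector, because $u_k$ is the first column of $S$. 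Hence $S^{-1}u_k v^* S = e_1 w^*$ where $w^* = v^*S$, and this matrix has nonzero entries only in its first row.

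Next I would write $B := S^{-1}AS$ in block form with the $(1,1)$ entry equal to $\lambda_k$; because $B$ is (upper) triangular, $B + e_1 w^*$ is again triangular, differing from $B$ only in the first row. Its diagonal entries are therefore $\lambda_k + w_1,\ \lambda_2,\ \ldots,\ \lambda_n$, where $w_1$ is the first component of $w^*=v^*S$, i.e. $w_1 = v^* S e_1 = v^* u_k$. Reading off the eigenvalues of a triangular matrix from its diagonal then gives exactly the claimed spectrum $\lambda_1,\ldots,\lambda_{k-1},\lambda_k+v^*u_k,\lambda_{k+1},\ldots,\lambda_n$ (relabelling so that the position $k$ carries the shifted value). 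An alternative, essentially equivalent route avoids triangularization and instead computes $\det(\wt A - \lambda I)$ directly via the matrix-determinant lemma: $\det(A + u_kv^* - \lambda I) = \det(A-\lambda I)\bigl(1 + v^*(A-\lambda I)^{-1}u_k\bigr)$ for $\lambda$ not an eigenvalue of $A$, and then simplify $(A-\lambda I)^{-1}u_k = (\lambda_k-\lambda)^{-1}u_k$ since $Au_k=\lambda_k u_k$.

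I would most likely present the determinant-lemma version as the clean argument: substituting the eigenrelation gives $\det(\wt A-\lambda I) = \det(A-\lambda I)\bigl(1 + \tfrac{v^*u_k}{\lambda_k-\lambda}\bigr) = \det(A-\lambda I)\cdot\tfrac{\lambda_k + v^*u_k - \lambda}{\lambda_k - \lambda}$. Writing $\det(A-\lambda I) = \prod_{j=1}^n(\lambda_j-\lambda)$, the factor $(\lambda_k-\lambda)$ in the denominator cancels one copy of $(\lambda_k-\lambda)$ in the product, leaving $\det(\wt A - \lambda I) = (\lambda_k + v^*u_k - \lambda)\prod_{j\ne k}(\lambda_j - \lambda)$, which exhibits the eigenvalues of $\wt A$ as asserted.

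The one point requiring a little care — the main (mild) obstacle — is the passage from an identity valid only for $\lambda$ outside the spectrum of $A$ to a statement about all $\lambda$, and the handling of multiplicities when some $\lambda_j$ coincide with $\lambda_k$. This is resolved by noting that both sides of the determinant identity are polynomials in $\lambda$ of degree $n$ (after clearing the denominator, as in the last displayed formula), so an identity holding on the cofinite set $\mathbb{C}\setminus\sigma(A)$ holds identically; the multiplicity bookkeeping is then automatic since we are comparing factored polynomials. In the triangularization approach the same subtlety does not even arise, which is why it is worth mentioning as a backup. Either way the proof is short, and I would keep the determinant computation as the primary exposition.
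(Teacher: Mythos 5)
Your primary argument (matrix determinant lemma plus the eigenrelation $(A-\lambda I)^{-1}u_k=(\lambda_k-\lambda)^{-1}u_k$, followed by a polynomial-identity argument to extend off the spectrum) is correct and is essentially the same determinant computation the paper uses to prove its functional reformulation in Section~2; the paper merely packages it as $\det\bigl((zI-A)(I+\tfrac{\lambda-\mu}{z-\lambda}uv^*)\bigr)=\det(zI-A)\tfrac{z-\mu}{z-\lambda}$, using $(zI-A)u=(z-\lambda)u$ so that no inverse and hence no invertibility caveat ever appears. Your Schur-triangularization backup is also valid, but the determinant version is the one matching the paper.
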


This elementary result has been applied in different contexts, more
specifically to the spectral analysis of the PageRank matrix
\cite{serra}, to the nonnegative inverse eigenvalue problem
\cite{trabajo}, \cite{soto-rojo}, to accelerating the convergence of
iterative methods for solving matrix equations in particular quadratic
equations \cite{meini:ilas10}, equations related to QBD Markov chains
\cite{hmr01}, \cite{guo03}, to M/G/1-type Markov chains
\cite{bini-meini-spitkovsky}, \cite{blm:book}, or for algebraic
Riccati equations in \cite{gh:shift}, \cite{gim:shift},
\cite{ip:shift}, and \cite{bim:book}.

In this paper we revisit Brauer's theorem in functional form and
generalize it to a non-degenerate $n\times n$ matrix function $A(z)$ analytic in
the open annulus $\mathbb A_{r_1,r_2}=\{z\in\mathbb C:\quad
r_1<|z|<r_2\}$, where $0\le r_1<r_2$, so that it can be described by a
matrix Laurent series $A(z)=\sum_{i\in\mathbb Z}z^iA_i$ convergent for
$z\in\mathbb A_{r_1,r_2}$. Here, non-degenerate means that $\det A(z)$
is not identically zero. Throughout the paper we assume that all the
matrix functions are nondegenerate.

Let $\lambda\in\mathbb A_{r_1,r_2}$ be an eigenvalue of $A(z)$, i.e.,
such that $\det A(\lambda)=0$ and $u\in\mathbb C^n$, $u\ne 0$, be
a corresponding eigenvector, i.e., such that
$A(\lambda)u=0$.
We prove that for any $v\in\mathbb C^n$ such that $v^*u=1$  and for any
$\mu\in\mathbb C$, the function $\wt
A(z)=A(z)(I+\frac{\lambda-\mu}{z-\lambda} uv^*)$ is still analytic in
$\mathbb A_{r_1,r_2}$, where $v^*$ denotes the transpose conjugate of $v$. 
Moreover, $\wt A(z)$ has all the eigenvalues of
$A(z)$, except for $\lambda$ which, if $\mu\in\mathbb A_{r_1,r_2}$, is
replaced by $\mu$, otherwise is removed.  We provide explicit
expressions which relate the coefficients $\wt A_i$ of $\wt
A(z)=\sum_{i\in\mathbb Z}z^i\wt A_i$ to those of $A(z)$.

This transformation, called {\em right shift}, is obtained by using a
right eigenvector $u$ of $A(z)$. A similar version, called {\em left
  shift}, is given by using a left eigenvector $v$ of
$A(z)$. Similarly, a combination of the left and the right shift,
leads to a transformation called {\em double shift}, which enables one
to shift a pair of eigenvalues by operating to the right and to the
left of $A(z)$.

This result, restricted to a matrix Laurent polynomial
$A(z)=\sum_{-h}^k z^i A_i$, provides a shifted function which is still
a matrix Laurent polynomial $\wt A(z)=\sum_{i=-h}^kz^i \wt A_i$. In
the particular case where $A(z)=zI-A$, we find that $\wt A(z)=zI-\wt
A$ is such that $\wt A$ is the matrix given in the classical theorem
of Brauer \cite{brauer}. Therefore, these results provide an extension
of Brauer's Theorem \ref{th:brauer} to matrix Laurent series and to
matrix polynomials.

A further generalization is obtained by performing the right, left, or
double shift simultaneously to a packet of eigenvalues once we are
given an invariant (right or left) subspace associated with this set
of eigenvalues.

Assume that $A(z)$ admits a canonical factorization $A(z)=U(z)L(z^{-1})$,
where $U(z)=\sum_{i=0}^\infty z^iU_i$, $L(z)=\sum_{i=0}^\infty z^iL_i$
are analytic and nonsingular for $|z|\le 1$.  An interesting issue is
to find out under which assumptions the function $\wt A(z)$, obtained
after any kind of the above transformations, still admits a canonical
factorization $\wt A(z)=\wt U(z)\wt L(z^{-1})$.
We investigate this issue and provide explicit expressions to the
functions $\wt U(z)$ and $\wt L(z)$. We examine also the more
complicated problem to determine, if they exist, the canonical
factorizations of both the functions $\wt A(z)$ and $\wt A(z^{-1})$.

These latter results find an interesting application to determine the
explicit solutions of matrix difference equations, by relying on the
theory of standard triples of \cite{glr82}. In fact, in the case where
the matrix polynomial $A(z)=\sum_{i=0}^dz^iA_i$, which defines the
matrix difference equation, has some multiple eigenvalues, the tool of
standard triples cannot be always applied. However, by shifting away
the multiple eigenvalues into a set of pairwise different eigenvalues,
we can transform $A(z)$ into a new matrix polynomial $\wt A(z)$, which
has all simple eigenvalues. This way, the matrix difference equation associated
with $\wt A(z)$ can be solved with the tool of standard triples
and we can easily reconstruct the solution of the original matrix difference
equation, associated with $A(z)$, from that associated with $\wt A(z)$.
This fact enable us to provide formal solutions to the Poisson problem
for QBD Markov chains also in the null-recurrent case
\cite{blm:shift}.

Another important issue related to the shift technique concerns
solving matrix equations. In fact, for quadratic matrix polynomials,
computing the canonical factorization corresponds to solving a pair of
matrix equations. The same property holds, to a certain extent, also
for general matrix polynomials or for certain matrix Laurent series
\cite{blm:book}. If the matrix polynomial has a repeated eigenvalue,
as it happens in null recurrent stochastic processes, then the
convergence of numerical algorithms slows down and the conditioning of
the sought solution deteriorates. Shifting multiple eigenvalues
provides matrix functions, where the associated computational problem
is much easier to solve and where the solution is better
conditioned. Moreover the solution of the original equation can be
obtained by means of the formulas, that we provide in this paper,
relating the canonical factorizations of $A(z)$, $A(z^{-1})$ to the
canonical factorizations of $\wt A(z)$ and $\wt A(z^{-1})$.

An interesting application concerns manipulating a matrix polynomial
$A(z)$ having a singular leading coefficient. In fact, in this case
$A(z)$ has some eigenvalues at infinity. By using the shift technique
it is simple to construct a new matrix polynomial $\wt A(z)$ having
the same eigenvalues of $A(z)$ except for the eigenvalues at infinity
which are replaced by some finite value, say 1. A nonsingular leading
coefficient allows one to apply specific algorithms that require this
condition, see for instance \cite{bini-robol}, \cite{ttz1},
\cite{ttz2}.

The paper is organized as follows. Section \ref{sec:gen} provides a
functional formulation of Brauer's theorem, followed by its extension
to matrix polynomials and to matrix Laurent series. Section
\ref{sec:set} describes a formulation where a set of selected
eigenvalues is shifted somewhere in the complex plane.  Section
\ref{sec:wh} deals with the transformations that the canonical
factorization of a matrix function has after that some eigenvalue has
been shifted away.  Section \ref{sec:specific} deals with some
specific cases and applications. In particular, we examine the
possibility of shifting eigenvalues from/to infinity, we analyze right
and left shifts together with their combination, that is, the double
shift, and apply them to manipulating palindromic matrix
polynomials. Then we analyze quadratic matrix polynomials and the
associated matrix equations. In particular, we relate the solutions of
the shifted equations to the ones of the original equations.

\section{Generalizations and extensions}\label{sec:gen}
In this section, we provide some generalizations and extensions of
Brauer Theorem~\ref{th:brauer}. In particular, we give a functional
formulation, from which we obtain an extension to matrix polynomials
and to matrix (Laurent) power series.

\subsection{Functional formulation and extension to matrix polynomials}

Let $A$ be an $n\times n$ matrix, let $\lambda$ and $u\in\mathbb C^n$,
$u\ne 0$, be such that $Au=\lambda u$.  Consider the rational function
$I+\frac{\lambda-\mu}{z-\lambda}Q$, where $Q=uv^*$ and $v$ is any
nonzero vector such that $v^*u=1$ and $\mu$ is any complex number.  We have
the following

\begin{thm}
The rational function $\wt
A(z)=(zI-A)(I+\frac{\lambda-\mu}{z-\lambda}Q)$ coincides with the
matrix polynomial $\wt A(z)=zI-\wt A$, $\wt
A=A+(\mu-\lambda)Q$. Moreover the eigenvalues of $\wt A$ coincide with
the eigenvalues of $A$ except for $\lambda$ which is replaced by
$\mu$.
\begin{proof}
We have $\wt A(z)=(zI-A)(I+\frac{\lambda-\mu}{z-\lambda}Q)=
zI-A+\frac{\lambda-\mu}{z-\lambda}(zI-A)uv^*=
zI-A+\frac{\lambda-\mu}{z-\lambda}(z-\lambda)uv^*=zI-(A-(\lambda-\mu)Q)$. Moreover,
$\det(zI-\wt
A)=\det(zI-A)\det(I+\frac{\lambda-\mu}{z-\lambda}Q)=\det(zI-A)(1+\frac{\lambda-\mu}{z-\lambda}v^*u)=\det(zI-A)\frac{z-\mu}{z-\lambda}$. This
implies that the eigenvalue $\lambda$ of $A$ is replaced by $\mu$.
\end{proof}
\end{thm}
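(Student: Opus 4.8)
The plan is to verify the two claims of the theorem directly by algebraic manipulation, exploiting the fact that $u$ is an eigenvector of $A$. First I would compute the product $(zI-A)(I+\frac{\lambda-\mu}{z-\lambda}Q)$ by distributing: this gives $zI-A+\frac{\lambda-\mu}{z-\lambda}(zI-A)Q$. Since $Q=uv^*$ and $Au=\lambda u$, we have $(zI-A)u=(z-\lambda)u$, so $(zI-A)Q=(zI-A)uv^*=(z-\lambda)uv^*=(z-\lambda)Q$. Substituting back, the factor $(z-\lambda)$ in the numerator cancels the denominator, leaving $zI-A+(\lambda-\mu)Q=zI-\bigl(A-(\lambda-\mu)Q\bigr)=zI-\bigl(A+(\mu-\lambda)Q\bigr)$, which is exactly $zI-\wt A$ with $\wt A=A+(\mu-\lambda)Q$. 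This also shows the rational function is in fact a polynomial (the pole at $z=\lambda$ is removable).

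For the spectral claim, I would use the multiplicativity of the determinant together with the matrix determinant lemma. We have $\det(zI-\wt A)=\det\bigl((zI-A)(I+\frac{\lambda-\mu}{z-\lambda}Q)\bigr)=\det(zI-A)\det\bigl(I+\frac{\lambda-\mu}{z-\lambda}Q\bigr)$. Since $Q=uv^*$ is rank one, $\det(I+\frac{\lambda-\mu}{z-\lambda}uv^*)=1+\frac{\lambda-\mu}{z-\lambda}v^*u=1+\frac{\lambda-\mu}{z-\lambda}$ using $v^*u=1$. Simplifying the scalar factor gives $\frac{z-\lambda+\lambda-\mu}{z-\lambda}=\frac{z-\mu}{z-\lambda}$, hence $\det(zI-\wt A)=\det(zI-A)\cdot\frac{z-\mu}{z-\lambda}$. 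Reading off roots: the factor $z-\lambda$ in $\det(zI-A)$ is cancelled and replaced by $z-\mu$, so $\lambda$ is removed from the spectrum and $\mu$ is inserted, while all other eigenvalues (counted with multiplicity) are untouched.

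There is really no deep obstacle here; the only point requiring a little care is ensuring the cancellation is legitimate as an identity of rational functions, i.e.\ that the computed polynomial $zI-\wt A$ genuinely equals the rational expression away from $z=\lambda$ and extends across it. This follows because polynomials agreeing on the cofinite set $\mathbb C\setminus\{\lambda\}$ agree everywhere, so no continuity argument is strictly needed. Likewise, the determinant identity $\det(zI-\wt A)=\det(zI-A)\frac{z-\mu}{z-\lambda}$ should be read as an identity of rational functions, but since the left side is a polynomial the right side is too, confirming consistency. One should also note the implicit nondegeneracy: if $\mu=\lambda$ the transformation is trivial, and the argument goes through unchanged.
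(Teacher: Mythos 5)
Your proof is correct and follows essentially the same route as the paper: distribute the product, use $(zI-A)u=(z-\lambda)u$ to cancel the pole, and then apply the determinant multiplicativity together with $\det(I+\alpha uv^*)=1+\alpha v^*u$ to get $\det(zI-\wt A)=\det(zI-A)\frac{z-\mu}{z-\lambda}$. The extra remarks on reading the cancellation as an identity of rational functions are a welcome (if minor) clarification over the paper's terser presentation.
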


Relying on the above functional formulation of Brauer theorem, we may
prove the following extension to matrix polynomials:

\begin{thm}\label{th:matpol}
Let $A(z)=\sum_{i=0}^dz^iA_i$ be an $n\times n$ matrix polynomial. Let
$\lambda\in\mathbb C$ and $u\in\mathbb C^n$, $u\ne 0$, be such that
$A(\lambda)u=0$.  For any $\mu\in\mathbb C$ and for any $v\in\mathbb
C^n$ such that $v^*u=1$, the rational function $\wt
A(z)=A(z)(I+\frac{\lambda-\mu}{z-\lambda}Q)$, with $Q=uv^*$, coincides
with the matrix polynomial $ \wt A(z)=\sum_{i=0}^d z^i\wt A_i $, where
\[
\wt A_i=A_i+
(\lambda-\mu)\sum_{k=0}^{d-i-1}\lambda^kA_{k+i+1}Q,~~ i=0,\ldots, d-1,\quad \wt A_d=A_d.
\]
Moreover, the eigenvalues of $\wt A(z)$ coincide with those of $A(z)$
except for $\lambda$ which is shifted to $\mu$.
\end{thm}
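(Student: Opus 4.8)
The plan is to verify directly that multiplying $A(z)$ on the right by the rational matrix factor $R(z) := I + \frac{\lambda-\mu}{z-\lambda}Q$ with $Q = uv^*$ produces a polynomial, compute its coefficients, and then read off the effect on the spectrum from the determinant. First I would write $A(z)R(z) = A(z) + \frac{\lambda-\mu}{z-\lambda}A(z)uv^*$, so the only possible singularity is the simple pole at $z=\lambda$; since $A(\lambda)u = 0$, the function $z \mapsto A(z)u$ vanishes at $z=\lambda$, hence $\frac{1}{z-\lambda}A(z)u$ is a polynomial (in fact of degree $d-1$) in $z$. This already shows $\wt A(z)$ is a matrix polynomial of degree at most $d$, and comparing the $z^d$ term shows $\wt A_d = A_d$ since the correction term has degree $d-1$.

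The heart of the computation is to evaluate $\frac{1}{z-\lambda}A(z)u$ explicitly. I would use the standard finite-geometric-series identity $\frac{z^j - \lambda^j}{z-\lambda} = \sum_{k=0}^{j-1} z^{k}\lambda^{\,j-1-k}$, applied after writing $A(z)u = A(z)u - A(\lambda)u = \sum_{j=1}^{d}(z^j - \lambda^j)A_j u$. This gives
\[
\frac{1}{z-\lambda}A(z)u = \sum_{j=1}^{d}\sum_{k=0}^{j-1} z^k \lambda^{\,j-1-k} A_j u.
\]
Reindexing by collecting the coefficient of $z^i$ (with $k=i$, and $j = k+i+1$ running appropriately — more precisely setting $m = j-1-k$ so that $A_j = A_{m+i+1}$ with $m = 0,\ldots,d-i-1$) yields the coefficient $\sum_{m=0}^{d-i-1}\lambda^m A_{m+i+1}u$ of $z^i$. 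Multiplying by $\frac{\lambda-\mu}{1}v^*$ and adding $A_i$ reproduces exactly the claimed formula $\wt A_i = A_i + (\lambda-\mu)\sum_{k=0}^{d-i-1}\lambda^k A_{k+i+1}Q$ for $i = 0,\ldots,d-1$. This bookkeeping with the index shift is the step where one must be careful, but it is entirely routine.

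Finally, for the spectrum I would take determinants: $\det\wt A(z) = \det A(z)\det R(z)$, and since $R(z)$ is a rank-one update of the identity, $\det R(z) = 1 + \frac{\lambda-\mu}{z-\lambda}v^*u = 1 + \frac{\lambda-\mu}{z-\lambda} = \frac{z-\mu}{z-\lambda}$, using $v^*u = 1$. Hence $\det\wt A(z) = \det A(z)\cdot\frac{z-\mu}{z-\lambda}$ as rational functions, and since both $\det A(z)$ and $\det\wt A(z)$ are polynomials, this equality of rational functions shows the factor $(z-\lambda)$ in $\det A(z)$ is cancelled and replaced by $(z-\mu)$, i.e., $\lambda$ is removed from the eigenvalue list and $\mu$ inserted (with all other roots, hence all other eigenvalues, unchanged). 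One should note in passing that if $\lambda$ is a multiple root of $\det A(z)$ then only one copy is shifted; the argument is unaffected since it is an identity of rational functions. I do not anticipate a genuine obstacle here — the only mild subtlety is confirming that $\det\wt A(z)$ really is a polynomial (equivalently that $\wt A(z)$ has no pole), which the first paragraph already established.
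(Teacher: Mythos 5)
Your proof is correct and follows essentially the same route as the paper: the same use of $A(\lambda)u=0$ to rewrite the correction term, the same finite-geometric-series identity and reindexing to extract the coefficient of $z^i$, and the same rank-one determinant identity $\det(I+\frac{\lambda-\mu}{z-\lambda}uv^*)=\frac{z-\mu}{z-\lambda}$ for the spectral claim. The only cosmetic difference is that you work with the vector $A(z)u$ rather than the matrix $A(z)-A(\lambda)$, which changes nothing.
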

\begin{proof}
Since $A(\lambda)u=0$ we find that $\wt
A(z)=A(z)(I+\frac{\lambda-\mu}{z-\lambda}Q)=
A(z)+\frac{\lambda-\mu}{z-\lambda}(A(z)-A(\lambda))uv^*$. Moreover,
since $A(z)-A(\lambda)=\sum_{i=1}^d(z^i-\lambda^i)A_i$ and
$z^i-\lambda^i=(z-\lambda)\sum_{j=0}^{i-1}\lambda^{i-j-1}z^{j}$ we get
\[
\wt A(z)=A(z)+(\lambda-\mu)\sum_{i=1}^d\sum_{j=0}^{i-1}\lambda^{i-j-1}z^{j}A_i Q=
A(z)+(\lambda-\mu)\sum_{i=0}^{d-1} z^i\sum_{k=0}^{d-i-1} \lambda^kA_{k+i+1}Q.
\]
Whence we deduce that $\wt
A_i=A_i+(\lambda-\mu)\sum_{k=0}^{d-i-1}\lambda^kA_{k+i+1}Q$, for
$i=0,\ldots, d-1$. Concerning the eigenvalues, by taking determinants
in both sides of $\wt A(z)=A(z)(I+\frac{\lambda-\mu}{z-\lambda}Q)$,
one obtains $\det\wt A(z)=\det
A(z)\det(I+\frac{\lambda-\mu}{z-\lambda}v^*u)=\det
A(z)\frac{z-\mu}{z-\lambda}$, which completes the proof.
\end{proof}

It is interesting to observe that the function obtained by applying
the shift to a matrix polynomial is still a matrix polynomial with the
same degree.

A different way of proving this result for a matrix polynomial
consists in applying the following three steps: 1) to extend Brauer's
theorem to a linear pencil, that is, to a function of the kind $zB+C$;
2) to apply this extension to the $nd\times nd$ linear pencil
$\mathcal A(z)=z\mathcal B+\mathcal C$ obtained by means of a
Frobenius companion linearization of the matrix polynomial; in fact,
it is possible to choose the vector $v$ in such a way that the shifted
pencil still keeps the Frobenius structure; 3) reformulate the problem
in terms of matrix polynomial and get the extension of Brauer's
theorem. In this approach we have to rely on different companion forms
if we want to apply either the right or the left shift.

This approach, based on companion linearizations, does not work in the
more general case of matrix Laurent series, while the functional
formulation still applies as explained in the next section.

As an example of application of this extension consider the quadratic
matrix polynomial $A(z)=A_0+zA_1+z^2A_2$ where
\[
A_0=-\begin{bmatrix}1&1\\0&1\end{bmatrix},\quad A_1=\begin{bmatrix}4&3\\1&4\end{bmatrix},\quad A_2=-\begin{bmatrix}3&0\\1&2\end{bmatrix}.
\]
Its eigenvalues are $1/3,1/2,1,1$. Moreover $u=[1,0]^*$ is an eigenvector corresponding to the eigenvalue 1.
Applying the Matlab function {\tt polyeig} to $A(z)$ provides the following approximations to the eigenvalues
\begin{verbatim}
0.333333333333333,  0.500000000000002,
0.999999974684400,  1.000000025315599.
\end{verbatim}
The large relative error in the approximation of the eigenvalue 1 is due to the ill conditioning of this double eigenvalue. Performing a shift of $\lambda=1$ 
to $\mu=0$, with $v=u$, so that $Q=\left[\begin{smallmatrix}1&0\\ 0&0\end{smallmatrix}\right]$, one obtains
\[
\wt A_0=-\begin{bmatrix}0&1\\0&1\end{bmatrix},\quad \wt A_1=\begin{bmatrix}1&3\\0&4\end{bmatrix},\quad \wt A_2=-\begin{bmatrix}3&0\\1&2\end{bmatrix}.
\]
Applying the Matlab function {\tt polyeig} to $\wt A(z)$ provides the following approximations to the eigenvalues
\begin{verbatim}
0.000000000000000,  0.333333333333333,
0.500000000000000,  1.000000000000000.
\end{verbatim}
In this case, the approximations are accurate to the machine precision
since the new polynomial has simple well conditioned eigenvalues. This
transformation has required the knowledge of the double eigenvalue and
of the corresponding eigenvector. For many problems, typically for
matrix polynomials encountered in the analysis of stochastic
processes, this information is readily available.

\subsection{The case of matrix (Laurent) power series}
The shift based on the functional approach, described for a matrix
polynomial, can be applied to deal with nondegenerate matrix functions
which are analytic on a certain region of the complex plane.  Let us
start with the case of a matrix power series.

\begin{thm}\label{th:matpower}
Let $A(z)=\sum_{i=0}^\infty z^iA_i$ be an $n\times n$ matrix power
series which is analytic in the disk $\D_r=\{z\in\mathbb C:
|z|<r\}$. Let $\lambda\in \D_r$ and $u\in\mathbb C^n$, $u\ne 0$, be
such that $A(\lambda)u=0$.  For any any $\mu\in \mathbb C$ and
$v\in\mathbb C^n$ such that $v^*u=1$, the function $\wt
A(z)=A(z)(I+\frac{\lambda-\mu}{z-\lambda}Q)$, with $Q=uv^*$, is
analytic for $z\in \D_r$ and coincides with the matrix power series
\[
\wt A(z)=\sum_{i=0}^\infty z^i \wt A_i,\quad \wt A_i=A_i+
(\lambda-\mu)\sum_{k=0}^{\infty}\lambda^kA_{k+i+1},\quad i=0,1\ldots .
\]
Moreover, $\det\wt A(z)=\det A(z)\frac{z-\mu}{z-\lambda}$.
\end{thm}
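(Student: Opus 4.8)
The plan is to mimic the polynomial argument of Theorem \ref{th:matpol}, replacing the finite sums by convergent series and being careful about the annulus/disk of analyticity. First I would observe that $I+\frac{\lambda-\mu}{z-\lambda}Q$ is rational with its only possible pole at $z=\lambda$, so the product $A(z)\bigl(I+\frac{\lambda-\mu}{z-\lambda}Q\bigr)$ is manifestly analytic on $\D_r\setminus\{\lambda\}$; the real content is to show the apparent pole at $\lambda$ is removable. For this I would write, exactly as before,
\[
\wt A(z)=A(z)+\frac{\lambda-\mu}{z-\lambda}\bigl(A(z)-A(\lambda)\bigr)uv^*,
\]
using $A(\lambda)u=0$ to insert the $-A(\lambda)$ term for free. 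Since $A(z)-A(\lambda)$ vanishes at $z=\lambda$ and is analytic on $\D_r$, the quotient $\frac{A(z)-A(\lambda)}{z-\lambda}$ extends analytically across $\lambda$, so $\wt A(z)$ is analytic on all of $\D_r$.

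Next I would compute the power series coefficients. Writing $A(z)-A(\lambda)=\sum_{i\ge 1}(z^i-\lambda^i)A_i$ and using the same identity $z^i-\lambda^i=(z-\lambda)\sum_{j=0}^{i-1}\lambda^{i-j-1}z^j$, the factor $z-\lambda$ cancels and one is left with
\[
\wt A(z)=A(z)+(\lambda-\mu)\sum_{i\ge 1}\sum_{j=0}^{i-1}\lambda^{i-j-1}z^j A_i Q
=A(z)+(\lambda-\mu)\sum_{i\ge 0}z^i\sum_{k\ge 0}\lambda^k A_{k+i+1}Q,
\]
after reindexing with $k=i-j-1$; matching the coefficient of $z^i$ gives the stated formula for $\wt A_i$. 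The one technical point here that has no analogue in the polynomial case is convergence: the inner series $\sum_{k\ge 0}\lambda^k A_{k+i+1}$ must be shown to converge (absolutely). This follows because $\lambda\in\D_r$, so $|\lambda|<\rho$ for some $\rho<r$ with $\sum_i \|A_i\|\rho^i<\infty$ by analyticity, whence $\sum_k \|A_{k+i+1}\|\,|\lambda|^k\le |\lambda|^{-i-1}\sum_{k}\|A_{k+i+1}\|\,|\lambda|^{k+i+1}<\infty$; the interchange of the double sum is justified by the same absolute convergence on compact subdisks. I expect this absolute-convergence bookkeeping, together with verifying that the resulting series for $\wt A(z)$ has radius of convergence at least $r$, to be the only real obstacle, and it is routine.

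Finally, the determinant identity is immediate and identical to the polynomial case: taking determinants in $\wt A(z)=A(z)\bigl(I+\frac{\lambda-\mu}{z-\lambda}Q\bigr)$ and using $\det(I+wv^*u)=1+w\,v^*u$ with $w=\frac{\lambda-\mu}{z-\lambda}$ and $v^*u=1$ gives $\det(I+\frac{\lambda-\mu}{z-\lambda}Q)=1+\frac{\lambda-\mu}{z-\lambda}=\frac{z-\mu}{z-\lambda}$, hence $\det\wt A(z)=\det A(z)\,\frac{z-\mu}{z-\lambda}$. This shows $\lambda$ is removed from the zero set of the determinant (provided it was a simple zero in the appropriate sense) and $\mu$ is introduced when $\mu\in\D_r$; since both sides are already known to be analytic on $\D_r$, the identity of meromorphic functions on $\D_r\setminus\{\lambda\}$ extends to the claimed identity.
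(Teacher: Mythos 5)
Your proposal is correct and follows essentially the same route as the paper's proof: the same decomposition $\wt A(z)=A(z)+\frac{\lambda-\mu}{z-\lambda}(A(z)-A(\lambda))uv^*$, the same telescoping identity for $z^i-\lambda^i$, the same reindexed double sum, and the same determinant computation. The only cosmetic difference is in the convergence bookkeeping, where the paper uses the coefficient bound $|A_i|\le\Gamma\sigma^{-i}$ while you invoke $\sum_i\|A_i\|\rho^i<\infty$ directly; both yield the absolute convergence needed to exchange the order of summation.
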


\begin{proof}
Since $A(\lambda)u=0$, we find that $\wt A(z)=A(z)(I+\frac{\lambda-\mu}{z-\lambda}Q)=
A(z)+\frac{\lambda-\mu}{z-\lambda}(A(z)-A(\lambda))uv^*$. Moreover,
since $A(z)-A(\lambda)=\sum_{i=1}^\infty(z^i-\lambda^i)A_i$ and $z^i-\lambda^i=(z-\lambda)\sum_{j=0}^{i-1}\lambda^{i-j-1}z^{j}$ we get 
\begin{equation}\label{eq:atproof}
\wt A(z)=A(z)+(\lambda-\mu)\sum_{i=1}^\infty\sum_{j=0}^{i-1}\lambda^{i-j-1}z^{j}A_i uv^*,
\end{equation}
provided that the series
$S(z)=\sum_{i=1}^\infty\sum_{j=0}^{i-1}\lambda^{i-j-1}z^{j}A_i$ is
convergent in $\D_r$. We prove this latter property.  Since $A(z)$ is
convergent in $\D_r$, then for any $0<\sigma<r$ there exists a
positive matrix $\Gamma$ such that $| A_i|\le \Gamma\sigma^{-i}$
\cite[Theorem 4.4c]{henrici:book}, \cite[Theorem 3.6]{blm:book}, where
$|A|$ denotes the matrix whose entries are the absolute values of the
entries of $A$ and the inequality holds component-wise.  Let $z$ be
such that $|\lambda|\le |z| <r$ and let $\sigma$ be such that
$|z|<\sigma<r$.  We have
$\sum_{i=1}^\infty\sum_{j=0}^{i-1}|\lambda^{i-j-1}z^{j}|\,|A_i|\le
\sum_{i=1}^\infty\sum_{j=0}^{i-1}|z|^{i-1}|A_i|\le \sigma^{-1}
\sum_{i=1}^\infty i(|z|/\sigma)^{i-1}\Gamma$, and the latter series is
convergent since $0\le |z|/\sigma<1$. Therefore the series $S(z)$ is
absolutely convergent, and hence convergent, for $z \in \D_r$. Since
the series is absolutely convergent, we may exchange the order of the
summations so that we obtain $S(z)=\sum_{i=0}^{\infty}
z^i\sum_{k=0}^{\infty} \lambda^kA_{k+i+1}$.  Whence, from
\eqref{eq:atproof}, we deduce that $\wt
A_i=A_i+(\lambda-\mu)\sum_{k=0}^{\infty}\lambda^kA_{k+i+1}$, for
$i=0,1,\ldots$. By taking determinants in both sides of $\wt
A(z)=A(z)(I+\frac{\lambda-\mu}{z-\lambda}Q)$, one obtains $\det\wt
A(z)=\det A(z)\det(I+\frac{\lambda-\mu}{z-\lambda}v^*u)=\det
A(z)(1+\frac{\lambda-\mu}{z-\mu})=\det A(z)\frac{z-\mu}{z-\lambda}$,
which completes the proof.
\end{proof}

We observe that, if in the above theorem $\mu$ does not belong to
$\D_r$, the eigenvalues of the function $\wt A(z)$ coincide with the
eigenvalues of $A(z)$, together with their multiplicities, except for
$\lambda$. More specifically, if $\lambda$ is a simple eigenvalue of
$A(z)$, then it is not any more eigenvalue of $\wt A(z)$; if $\lambda$
is an eigenvalue of multiplicity $\ell>1$ for $A(z)$, then $\lambda$
is an eigenvalue of multiplicity $\ell-1$ for $\wt A(z)$.  On the
other hand, if $\mu\in\mathbb D_r$ then in $\wt A(z)$ the eigenvalue
$\lambda$ is replaced by $\mu$.

A similar result can be stated for matrix Laurent series, i.e.,
matrix functions of the form $A(z)=\sum_{i\in\mathbb Z}z^iA_i$ which
are analytic over $\mathbb A_{r_1,r_2}=\{z\in\mathbb C:\quad
r_1<|z|<r_2\}$ for $0<r_1<r_2$.

\begin{thm}\label{th:matlaurent}
Let $A(z)=\sum_{i\in\mathbb Z} z^iA_i$ be an $n\times n$  matrix Laurent series which is analytic in the annulus
$\mathbb A_{r_1,r_2}$.
 Let $\lambda\in \mathbb A_{r_1,r_2}$ and $u\in\mathbb C^n$, $u\ne 0$, be such that $A(\lambda)u=0$.
For any $v\in\mathbb C^n$ such that $v^*u=1$ and for any $\mu\in \mathbb C$,  the  function
\begin{equation}\label{eq:shifted}
\wt A(z)=A(z)\left(I+\frac{\lambda-\mu}{z-\lambda}Q\right), \quad Q=uv^*,
\end{equation}
 is analytic for $z\in \mathbb A_{r_1,r_2}$ and coincides with the matrix 
Laurent series $\wt A(z)=\sum_{i\in\mathbb Z} z^i \wt A_i$, where
\begin{equation}\label{eq:matlaurent}\begin{split}
&\wt A_i=A_i+(\lambda-\mu)\sum_{k=0}^{\infty}\lambda^kA_{k+i+1}Q,\quad \hbox{ for }i\ge 0,\\
&\wt A_i=A_i-(\lambda-\mu)\sum_{k=0}^{\infty}\lambda^{-k-1}A_{-k+i}Q, \quad \hbox{ for }i<0.
\end{split}
\end{equation}
Moreover, $\det \wt A(z)=\det A(z)\frac{z-\mu}{z-\lambda}$.
\end{thm}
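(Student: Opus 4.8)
The plan is to follow the same route as in the proof of Theorem~\ref{th:matpower}, only now separating the analytic part $\sum_{i\ge 0}z^iA_i$ from the principal part $\sum_{i<0}z^iA_i$ of the Laurent series. First I would use $A(\lambda)u=0$ to rewrite \eqref{eq:shifted} as
\[
\wt A(z)=A(z)+\frac{\lambda-\mu}{z-\lambda}\bigl(A(z)-A(\lambda)\bigr)uv^*,
\]
and observe that, since $A(z)-A(\lambda)$ vanishes at $z=\lambda$, the quotient $(A(z)-A(\lambda))/(z-\lambda)$ extends analytically across $\lambda$; hence $\wt A(z)$ is analytic on all of $\mathbb A_{r_1,r_2}$ and therefore possesses a unique convergent Laurent expansion $\sum_{i\in\mathbb Z}z^i\wt A_i$ there. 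The remaining work is to identify the coefficients $\wt A_i$.

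To that end I would expand $z^i-\lambda^i=(z-\lambda)\sum_{j=0}^{i-1}\lambda^{i-j-1}z^j$ for $i\ge 1$ and, for negative exponents, $z^{-m}-\lambda^{-m}=-(z-\lambda)\sum_{j=0}^{m-1}\lambda^{-j-1}z^{j-m}$ for $m\ge 1$, which turns the identity above into $\wt A(z)=A(z)+(\lambda-\mu)\bigl(S_+(z)-S_-(z)\bigr)Q$ with $S_+(z)=\sum_{i\ge 1}\sum_{j=0}^{i-1}\lambda^{i-j-1}z^jA_i$ and $S_-(z)=\sum_{m\ge 1}\sum_{j=0}^{m-1}\lambda^{-j-1}z^{j-m}A_{-m}$. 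For $S_+(z)$ the estimate is verbatim the one already used for Theorem~\ref{th:matpower}: using $|A_i|\le\Gamma\sigma^{-i}$ together with $|\lambda|\le|z|<\sigma<r_2$ the double series converges absolutely and may be rearranged into $S_+(z)=\sum_{i\ge 0}z^i\sum_{k\ge 0}\lambda^kA_{k+i+1}$, convergent for $|z|<r_2$. For $S_-(z)$ I would invoke the dual growth bound for the principal part, namely $|A_{-m}|\le\Gamma'\tau^{m}$ for every $\tau>r_1$ (the cited power-series estimate applied to $\sum_{m\ge 1}w^mA_{-m}$), and then, for $r_1<\tau<|z|\le|\lambda|$, bound $|\lambda^{-j-1}z^{j-m}|\le|z|^{-m-1}$, so that $\sum_{m\ge 1}\sum_{j=0}^{m-1}|\lambda^{-j-1}z^{j-m}|\,|A_{-m}|\le|z|^{-1}\Gamma'\sum_{m\ge 1}m(\tau/|z|)^{m}<\infty$; absolute convergence then permits interchanging the two summations, giving $S_-(z)=\sum_{i<0}z^i\sum_{k\ge 0}\lambda^{-k-1}A_{-k+i}$, convergent for $|z|>r_1$. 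Substituting the two partial expansions reproduces the coefficients listed in \eqref{eq:matlaurent}.

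The one step that needs genuine care — as opposed to a routine reprise of the power-series estimate — is that $S_+$ and $S_-$ have \emph{complementary} natural domains of convergence ($|z|<r_2$ and $|z|>r_1$, respectively), so the algebraic identity relating $\wt A(z)$ to $\sum_i z^i\wt A_i$ is literally valid only on a subannulus, and a priori only guaranteed on the circle $|z|=|\lambda|$, where both series converge. To conclude I would note that the constructed Laurent series nevertheless converges on all of $\mathbb A_{r_1,r_2}$ (its analytic part for $|z|<r_2$, its principal part for $|z|>r_1$), hence defines a function analytic on the annulus that agrees with $\wt A(z)$ along the circle $|z|=|\lambda|$; by the identity theorem the two coincide throughout $\mathbb A_{r_1,r_2}$, and uniqueness of Laurent coefficients then establishes \eqref{eq:matlaurent}. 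The determinant formula is immediate: taking determinants in \eqref{eq:shifted} and using $\det(I+xy^*)=1+y^*x$ with $v^*u=1$ gives $\det\wt A(z)=\det A(z)\,(1+\tfrac{\lambda-\mu}{z-\lambda})=\det A(z)\,\tfrac{z-\mu}{z-\lambda}$. Thus the only real novelty over Theorem~\ref{th:matpower} is supplying the bound for the principal-part coefficients and carrying out this analytic-continuation gluing.
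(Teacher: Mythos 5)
Your proof is correct and follows essentially the same route as the paper's: the same splitting of $A(z)-A(\lambda)$ into positive and negative powers, the same telescoping identities, the Cauchy-type coefficient bounds $|A_i|\le\Gamma\sigma^{-i}$ to justify absolute convergence and rearrangement, and the same determinant computation. The only addition is your identity-theorem gluing of the two half-annulus estimates, a point of care the paper passes over silently (one can also avoid it by bounding $|\lambda^{i-j-1}z^j|$ by $\max(|\lambda|,|z|)^{i-1}$ and its analogue for the principal part, which makes both double series absolutely convergent on the whole annulus at once); either way the argument is sound.
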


\begin{proof}
We proceed as in the proof of Theorem \ref{th:matpower}.  Since
$A(\lambda)u=0$, we find that $\wt
A(z)=A(z)(I+\frac{\lambda-\mu}{z-\lambda}Q)=
A(z)+\frac{\lambda-\mu}{z-\lambda}(A(z)-A(\lambda))uv^*$.  Observe
that
$A(z)-A(\lambda)=\sum_{i=1}^\infty(z^i-\lambda^i)A_i+\sum_{i=1}^\infty
(z^{-i}-\lambda^{-i})A_{-i}$.  Moreover, we have
$z^i-\lambda^i=(z-\lambda)\sum_{j=0}^{i-1}\lambda^{i-j-1}z^{j}$ and
$z^{-i}-\lambda^{-i}=-\frac
1{z\lambda}(z-\lambda)\sum_{j=0}^{i-1}\lambda^{-(i-j-1)}z^{-j}$. Thus
we get
\begin{equation}\label{eq:atproof2}
\begin{split}
&\wt A(z)=A(z)+(\lambda-\mu)S_+(z)uv^*-\frac {\lambda-\mu}{z\lambda}S_-(z)uv^*,\\
&S_+(z)=\sum_{i=1}^\infty\sum_{j=0}^{i-1}\lambda^{i-j-1}z^{j}A_i,\quad
S_-(z)=\sum_{i=1}^\infty\sum_{j=0}^{i-1}\lambda^{-(i-j-1)}z^{-j}A_{-i},
\end{split}
\end{equation}
provided that the series $S_+(z)$ and $S_-(z)$ are convergent in
$\mathbb A_{r_1,r_2}$. According to \cite[Theorem 3.6]{blm:book}
\cite[Theorem 4.4c]{henrici:book}, for any $r_1<\sigma<r_2$ there
exists a positive matrix $\Gamma$ such that $| A_i|\le
\Gamma\sigma^{-i}$ for $i\in\mathbb Z$.  Thus we may proceed as in the
proof of Theorem \ref{th:matpower} and conclude that the series
$S_+(z)$ and $S_-(z)$ are absolutely convergent, so that we may
exchange the order of the summations and get
\[
S_+(z)=\sum_{i=0}^{\infty} z^i\sum_{k=0}^{\infty} \lambda^kA_{k+i+1},\quad
S_-(z)=\sum_{i=0}^{\infty} z^{-i}\sum_{k=0}^{\infty} \lambda^{-k}A_{-(k+i+1)}.
\]
Whence from the first equation in \eqref{eq:atproof2} we deduce that
\[\begin{split}
&\wt A_i=A_i+(\lambda-\mu)\sum_{k=0}^{\infty}\lambda^kA_{k+i+1}uv^*,\quad \hbox{ for }i\ge 0,\\
&\wt A_i=A_i-(\lambda-\mu)\sum_{k=0}^{\infty}\lambda^{-k-1}A_{-k+i}uv^*, \quad \hbox{ for }i<0.
\end{split}
\]
 By taking determinants in both sides of $\wt
 A(z)=A(z)(I+\frac{\lambda-\mu}{z-\lambda}Q)$, one obtains $\det\wt
 A(z)=\det A(z)\det(I+\frac{\lambda-\mu}{z-\lambda}v^*u)=\det
 A(z)(1+\frac{\lambda-\mu}{z-\mu})=\det A(z)\frac{z-\mu}{z-\lambda}$,
 which completes the proof.
\end{proof}

Observe that, if $\mu\not\in\mathbb A_{r_1,r_2}$ then the eigenvalues
of $\wt A(z)$ coincide with those of $A(z)$ with their multiplicities
except for $\lambda$.  More specifically, if $\lambda$ is a simple
eigenvalue of $A(z)$, then it is not any more eigenvalue of $\wt
A(z)$; if $\lambda$ is an eigenvalue of multiplicity $\ell>1$ for
$A(z)$, then $\lambda$ is an eigenvalue of multiplicity $\ell-1$ for
$\wt A(z)$.  On the other hand, if $\mu\in\mathbb A_{r_1,r_2}$ then in
$\wt A(z)$ the eigenvalue $\lambda$ is replaced by $\mu$.

Observe also that if $A(z)=\sum_{i=-h}^k z^i A_i$ then from
\eqref{eq:matlaurent} it follows that $\wt A(z)= \sum_{i=-h}^k z^i \wt
A_i$.

\section{Shifting a set of eigenvalues}\label{sec:set}
In this section we provide a generalization of Brauer theorem, where
we can simultaneously shift a set of eigenvalues. We will treat the
case of a matrix, of a matrix polynomial and of a matrix Laurent
series.

Let $A\in\mathbb C^{n\times n}$, $U\in\mathbb C^{n\times m}$ and $\Lambda\in\mathbb C^{m\times m}$,
where $m<n$, be such that $U$ has full rank and $AU=U\Lambda$. In other
words, the eigenvalues of $\Lambda$ are a subset of the eigenvalues of
$A$ and the columns of $U$ span an invariant subspace for $A$
corresponding to the eigenvalues of $\Lambda$.  Without loss of
generality we may assume that the eigenvalues of $\Lambda$ are
$\lambda_1,\ldots,\lambda_m$ where $\lambda_i$ for $i=1,\ldots,n$ are
the eigenvalues of $A$.

We have the following generalization of Brauer's theorem

\begin{thm}\label{th:brauergen}
Let $A$ be an $n\times n$  matrix.
Let $U\in\mathbb C^{n\times m}$ and $\Lambda\in\mathbb C^{m\times m}$, where $m<n$, be such that $U$ has full rank and  $AU=U \Lambda$.
Let $V\in\mathbb C^{n\times m}$ be a full rank matrix such that $V^*U=I$. Let $S\in\mathbb C^{m\times m}$ any matrix. Then the rational matrix function 
\[
\wt A(z)=(zI-A)(I+U(zI-\Lambda)^{-1}(\Lambda-S)V^*)
\]
coincides with the linear pencil $zI-\wt A$ where $\wt A=A-U(\Lambda-S)V^*$. Moreover, $\det \wt A(z)=\det(zI-S)\frac{\det A(z)}{\det (zI-\Lambda)}$,
that is, the eigenvalues of $\wt A$ are given by $\mu_1,\ldots,\mu_m,\lambda_{m+1},\ldots,\lambda_n$, where $\mu_1,\ldots,\mu_m$ are the eigenvalues of the matrix $S$.
\end{thm}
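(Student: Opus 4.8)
The plan is to verify the identity $\wt A(z)=zI-\wt A$ by a direct expansion that uses the invariance relation $AU=U\Lambda$ to collapse the rational term into a constant matrix, and then to obtain the eigenvalue statement by taking determinants, with the help of the identity $\det(I+XY)=\det(I+YX)$.

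First I would write $\wt A(z)=(zI-A)+(zI-A)U(zI-\Lambda)^{-1}(\Lambda-S)V^*$. The key observation is that $(zI-A)U=zU-AU=zU-U\Lambda=U(zI-\Lambda)$, so the factor $(zI-\Lambda)^{-1}$ cancels and the second summand reduces to $U(\Lambda-S)V^*$. Hence $\wt A(z)=(zI-A)+U(\Lambda-S)V^*=zI-\big(A-U(\Lambda-S)V^*\big)=zI-\wt A$. This is a rational identity valid for every $z$ that is not an eigenvalue of $\Lambda$; since the right-hand side is a polynomial in $z$, it holds for all $z$.

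Next, taking determinants gives $\det\wt A(z)=\det(zI-A)\det\big(I+U(zI-\Lambda)^{-1}(\Lambda-S)V^*\big)$. Applying $\det(I+XY)=\det(I+YX)$ with $X=U$ and $Y=(zI-\Lambda)^{-1}(\Lambda-S)V^*$, and using $V^*U=I$, the second factor equals $\det\big(I+(zI-\Lambda)^{-1}(\Lambda-S)\big)=\det(zI-\Lambda)^{-1}\det\big(zI-\Lambda+\Lambda-S\big)=\det(zI-S)/\det(zI-\Lambda)$. This yields $\det\wt A(z)=\det(zI-S)\,\det(zI-A)/\det(zI-\Lambda)$, which is the claimed formula since $\det A(z)=\det(zI-A)$.

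Finally, to read this as the eigenvalue statement I would recall that, because $U$ has full rank and $AU=U\Lambda$, the columns of $U$ span an $A$-invariant subspace on which $A$ acts as $\Lambda$; completing $U$ to a basis $[\,U\mid U_\perp\,]$ block-triangularizes $A$, so $\det(zI-\Lambda)$ divides $\det(zI-A)$ with quotient a polynomial of degree $n-m$ whose roots are the remaining eigenvalues $\lambda_{m+1},\ldots,\lambda_n$. Multiplying by $\det(zI-S)$, whose roots are the eigenvalues $\mu_1,\ldots,\mu_m$ of $S$, shows that the eigenvalues of $\wt A$ are exactly $\mu_1,\ldots,\mu_m,\lambda_{m+1},\ldots,\lambda_n$. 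I do not expect a genuine obstacle here; the only steps needing a little care are the push-through determinant manipulation and the divisibility remark that legitimizes the phrase ``shifting the eigenvalues of $\Lambda$ to those of $S$''.
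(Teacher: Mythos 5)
Your proposal is correct and follows essentially the same route as the paper: both use $(zI-A)U=U(zI-\Lambda)$ to collapse the rational term into $U(\Lambda-S)V^*$, and both compute the determinant via $\det(I+XY)=\det(I+YX)$ together with $V^*U=I$. Your extra remark on the divisibility of $\det(zI-A)$ by $\det(zI-\Lambda)$ is a small, welcome elaboration the paper leaves implicit.
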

\begin{proof}
Denote $X(z)=(zI-\Lambda)^{-1}(\Lambda-S)$. Since $AU=U\Lambda$, we have 
$\wt A(z)=
zI-A+zUX(z)V^*-U\Lambda X(z)V^*=zI-A+U(z I-\Lambda)X(z)V^*$. Whence we get 
$\wt A(z)=zI-(A-U(\Lambda-S)V^*)$. Concerning the eigenvalues, taking determinants we have $\det \wt A(z)=\det A(z)\det(I+UX(z)V^*)=
\det A(z)\det(I+X(z)V^*U)=\det A(z)\det(I+X(z))$. Since 
$I+X(z)=(zI-\Lambda)^{-1}(zI-\Lambda+\Lambda-S)$, we obtain $\det \wt A(z)=\frac{\det A(z)}{\det(zI-\Lambda)}\det(zI-S)$, which completes the proof.
\end{proof}

The above results immediately extends to matrix polynomials. Assume that we are given a matrix $U\in\mathbb C^{n\times m}$ and 
$\Lambda\in\mathbb C^{m\times m}$, where $m<n$, $U$ has full rank and  $\sum_{i=0}^dA_iU\Lambda^i=0$. Then if $w$ is an eigenvector of $\Lambda$ corresponding to the eigenvalue $\lambda$, i.e., $\Lambda w=\lambda w$ then $0=\sum_{i=0}^dA_iU\Lambda^iw=\sum_{i=0}^dA_iU\lambda^iw$, that is,
$A(\lambda)Uw=0$ so that
$\lambda$ is eigenvalue of $A(z)$ and $Uw$ is eigenvector. We have the following result.

\begin{thm}\label{th:brauergenpol}
Let $A(z)=\sum_{i=0}^dz^iA_i$ be an $n\times n$  matrix polynomial.
Let $U\in\mathbb C^{n\times m}$ and $\Lambda\in\mathbb C^{m\times m}$, where $m<n$, be such that $U$ has full rank and  $\sum_{i=0}^dA_iU\Lambda^i=0$.
Let $V\in\mathbb C^{n\times m}$ be a full rank matrix such that $V^*U=I$. Let $S\in\mathbb C^{m\times m}$ be any matrix.
 Then the rational function
\[
\wt A(z)=A(z)(I+U(zI-\Lambda)^{-1}(\Lambda-S)V^*)\]
coincides with the matrix polynomial
$
\wt A(z)=\sum_{i=0}^dz^i \wt A_i$,
\[
 \wt A_i=A_i+
\sum_{k=0}^{d-j-1}A_{k+j+1} U \Lambda^k(\Lambda-S) V^*,\quad j=0,\ldots, d-1,\quad \wt A_d=A_d.
\]
Moreover, $\det \wt A(z)=\det(zI-S)\frac{\det A(z)}{\det
  (zI-\Lambda)}$, that is, the eigenvalues of $\wt A(z)$ coincide with
those of $A(z)$ except for those coinciding with the eigenvalues of
$\Lambda$ which are replaced by the eigenvalues of $S$.
\end{thm}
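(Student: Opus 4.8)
The plan is to follow the template of the proofs of Theorem~\ref{th:matpol} and Theorem~\ref{th:brauergen}: use the hypothesis $\sum_{i=0}^d A_iU\Lambda^i=0$ to show that the rational correction term is in fact a matrix polynomial (so that the factor $(zI-\Lambda)^{-1}$ cancels), re-index a double sum to read off the coefficients $\wt A_j$, and finally take determinants using $V^*U=I$ together with the identity $\det(I+XY)=\det(I+YX)$.

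For the first step, since $zI$ commutes with $\Lambda$ and $\sum_{i=0}^d A_iU\Lambda^i=0$, we have $A(z)U=\sum_{i=0}^d z^iA_iU=\sum_{i=1}^d A_iU(z^iI-\Lambda^i)$. Using the commuting factorization $z^iI-\Lambda^i=\bigl(\sum_{j=0}^{i-1}z^{i-1-j}\Lambda^{j}\bigr)(zI-\Lambda)$ we get that $A(z)U(zI-\Lambda)^{-1}=\sum_{i=1}^d\sum_{j=0}^{i-1}z^{i-1-j}A_iU\Lambda^{j}$ is a genuine matrix polynomial in $z$; this is the analogue of the cancellation of $(z-\lambda)$ in Theorem~\ref{th:matpol}, and the one point where the hypothesis relating $U$ and $\Lambda$ is really used. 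Hence $\wt A(z)=A(z)+A(z)U(zI-\Lambda)^{-1}(\Lambda-S)V^*$ is a matrix polynomial, obtained from $A(z)$ by a polynomial correction of degree at most $d-1$.

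It then remains to collect the powers of $z$. In the double sum the exponent of $z$ is $p=i-1-j$, and for fixed $p$ one has $i=p+k+1$ with the exponent $k$ of $\Lambda$ running from $0$ to $d-p-1$, so $A(z)U(zI-\Lambda)^{-1}=\sum_{p=0}^{d-1}z^{p}\sum_{k=0}^{d-p-1}A_{k+p+1}U\Lambda^{k}$; multiplying on the right by $(\Lambda-S)V^*$ and adding $A(z)$ gives $\wt A_p=A_p+\sum_{k=0}^{d-p-1}A_{k+p+1}U\Lambda^{k}(\Lambda-S)V^*$ for $p=0,\dots,d-1$ and $\wt A_d=A_d$, which is the claimed formula (and for $m=1$ specializes to Theorem~\ref{th:matpol}). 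Finally, taking determinants in $\wt A(z)=A(z)(I+U(zI-\Lambda)^{-1}(\Lambda-S)V^*)$, pushing $V^*$ to the front via $\det(I+XY)=\det(I+YX)$ and using $V^*U=I$, we obtain $\det\bigl(I+U(zI-\Lambda)^{-1}(\Lambda-S)V^*\bigr)=\det\bigl((zI-\Lambda)^{-1}(zI-S)\bigr)=\det(zI-S)/\det(zI-\Lambda)$, hence $\det\wt A(z)=\det A(z)\,\det(zI-S)/\det(zI-\Lambda)$; reading off the roots (the eigenvalues of $\Lambda$ disappear and those of $S$ appear) gives the eigenvalue statement. I expect no serious obstacle: everything is a finite sum, so, unlike the power-series case of Theorem~\ref{th:matpower}, no convergence argument is needed. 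The only points requiring a little care are the index bookkeeping in the re-index, and — for the last sentence about eigenvalues — the fact that $\det(zI-\Lambda)$ does divide $\det A(z)$ with the correct multiplicities, which follows because $(U,\Lambda)$ lifts through a companion linearization of $A(z)$ to an invariant pair, exactly as in the matrix case of Theorem~\ref{th:brauergen}.
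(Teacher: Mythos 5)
Your proof is correct and follows essentially the same route as the paper's: use $\sum_{i=0}^d A_iU\Lambda^i=0$ to replace $A(z)U$ by $\sum_{i=1}^d A_iU(z^iI-\Lambda^i)$, cancel $(zI-\Lambda)^{-1}$ via the commuting factorization of $z^iI-\Lambda^i$, re-index to read off the coefficients, and compute the determinant with $\det(I+XY)=\det(I+YX)$ and $V^*U=I$. Your closing observation that $\det(zI-\Lambda)$ must divide $\det A(z)$ with the right multiplicities is a point the paper leaves implicit, but it does not change the argument.
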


\begin{proof}
Denote $X(z)=(zI-\Lambda)^{-1}(\Lambda-S)$. Since  $\sum_{i=0}^dA_iU\Lambda^i=0$ we find that 
\[\begin{split}
\wt A(z)&=A(z)(I+UX(z)V^*)=
A(z)+A(z)UX(z)V^*\\
&=A(z)+(A(z)U-\sum_{i=0}^dA_iU\Lambda^i)X(z)V^*.
\end{split}\]
 Moreover,
since $A(z)U-\sum_{i=0}^dA_iU\Lambda^i=\sum_{i=1}^dA_iU(z^iI-\Lambda^i)$ and $z^iI-\Lambda^i=(zI-\Lambda)\sum_{j=0}^{i-1}z^j \Lambda^{i-j-1}$ we get 
\[\begin{split}
\wt A(z)=&A(z)+ \left(\sum_{i=1}^dA_iU  (zI-\Lambda)\sum_{j=0}^{i-1} z^j \Lambda^{i-j-1}\right)  X(z)V^*\\
=&
A(z)+ \left(\sum_{i=1}^dA_iU  \sum_{j=0}^{i-1} z^j \Lambda^{i-j-1}\right)(\Lambda-S)V^*\\
=& A(z)+ \sum_{j=0}^{d-1} z^j\sum_{k=0}^{d-j-1}  A_{k+j+1}U  \Lambda^{k}  (\Lambda-S)V^*.
\end{split}\]
Whence we deduce that 
$\wt A_i=A_i+
\sum_{k=0}^{d-j-1}A_{k+j+1} U \Lambda^k(\Lambda-S) V^*$, $i=0,\ldots,d-1$, $\wt A_d=A_d$.
 Concerning the eigenvalues, taking determinants we have $\det \wt A(z)=\det A(z)\det(I+UX(z)V^*)=
\det A(z)\det(I+X(z)V^*U)=\det A(z)\det(I+X(z))$. Since $I+X(z)=
(zI-\Lambda)^{-1}(zI-\Lambda+\Lambda-S)$, we obtain $\det \wt A(z)=\frac{\det A(z)}{\det(zI-\Lambda)}\det(zI-S)$, which completes the proof.
\end{proof}

Similarly, we may prove the following extensions of the multishift to matrix power series and to matrix Laurent series. Below, we report only the case of a matrix Laurent series. 

\begin{thm}\label{th:brauergenlaur}
Let $A(z)=\sum_{i\in\mathbb Z}z^iA_i$ be a matrix Laurent series
analytic over the annulus $\mathbb A_{r_1,r_2}$ where $r_1<r_2$.  Let
$U\in\mathbb C^{n\times m}$ and $\Lambda\in\mathbb C^{m\times m}$,
where $m<n$, such that $U$ has full rank and $\sum_{i\in\mathbb
  Z}A_iU\Lambda^i=0$.  Let $V\in\mathbb C^{n\times m}$ be a full rank
matrix such that $V^*U=I$. Let $S\in\mathbb C^{m\times m}$ be any
matrix.
 Then the matrix function
\[
\wt A(z)=A(z)(I+U(zI-\Lambda)^{-1}(\Lambda-S)V^*)
\]
 coincides with the matrix Laurent series
$
\wt A(z)=\sum_{i\in\mathbb Z}z^i \wt A_i$,
\[\begin{split}
& \wt A_i=A_i+
\sum_{k=0}^{\infty}A_{k+i+1} U \Lambda^k(\Lambda-S) V^*,\quad i\ge 0,\\
& \wt A_i=A_i-
\sum_{k=0}^{\infty}A_{-k+i} U \Lambda^{-k+1}(\Lambda-S) V^*,\quad i< 0.
\end{split}
\]
Moreover, $\det \wt A(z)=\det(zI-S)\frac{\det A(z)}{\det (zI-\Lambda)}$.
\end{thm}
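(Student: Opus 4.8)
The plan is to combine the purely algebraic computation used for Theorem~\ref{th:brauergenpol} with the convergence analysis developed in the proof of Theorem~\ref{th:matlaurent}. Write $X(z)=(zI-\Lambda)^{-1}(\Lambda-S)$, so that $\wt A(z)=A(z)+A(z)U\,X(z)V^*$. The crucial first step is to invoke the hypothesis $\sum_{i\in\mathbb Z}A_iU\Lambda^i=0$ in order to replace $A(z)U$ by $A(z)U-\sum_{i\in\mathbb Z}A_iU\Lambda^i=\sum_{i\ge1}A_iU(z^iI-\Lambda^i)+\sum_{i\ge1}A_{-i}U(z^{-i}I-\Lambda^{-i})$; this is exactly what forces the apparent poles of $\wt A(z)$ at the eigenvalues of $\Lambda$ to cancel.

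Next I would insert the factorizations $z^iI-\Lambda^i=(zI-\Lambda)\sum_{j=0}^{i-1}z^j\Lambda^{i-j-1}$ and, using that $\Lambda$ is invertible, $z^{-i}I-\Lambda^{-i}=-z^{-i}\Lambda^{-i}(z^iI-\Lambda^i)$, which gives $(z^{-i}I-\Lambda^{-i})(zI-\Lambda)^{-1}=-\sum_{j=0}^{i-1}z^{j-i}\Lambda^{-j-1}$. Substituting these into the expression for $\wt A(z)-A(z)$, the factor $zI-\Lambda$ cancels against $(zI-\Lambda)^{-1}$ in $X(z)$, and one is left with a genuine Laurent series in $z$ whose coefficients are finite (in the polynomial part) or convergent (in the series part) sums of terms of the shape $A_{k+i+1}U\Lambda^{k}(\Lambda-S)V^*$ for the nonnegative powers of $z$, and of an analogous family involving negative powers of $\Lambda$ for the negative powers of $z$; collecting the contributions to each power of $z$ and reindexing the inner sums, exactly as in the passage leading to the coefficient formulas in the proof of Theorem~\ref{th:matlaurent}, gives the claimed expressions for the $\wt A_i$.

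The step I expect to be the main obstacle, and the one deserving care, is the convergence bookkeeping that legitimates both the analyticity of $\wt A(z)$ on $\mathbb A_{r_1,r_2}$ and the interchange of the order of summation. Here I would reproduce the mechanism of Theorems~\ref{th:matpower} and~\ref{th:matlaurent}: for each $r_1<\sigma<r_2$ there is a positive matrix $\Gamma$ with $|A_i|\le\Gamma\sigma^{-i}$ for all $i\in\mathbb Z$. The new ingredient, absent in the scalar case, is that one must simultaneously dominate $\|\Lambda^k\|$ and $\|\Lambda^{-k}\|$; this is where the fact that $\Lambda$ is invertible with spectrum inside $\mathbb A_{r_1,r_2}$ (implicit throughout, and needed already for the formulas defining the $\wt A_i$ to converge) is used, yielding $\|\Lambda^{k}\|\le c_+\rho^{k}$ with $\rho<r_2$ and $\|\Lambda^{-k}\|\le c_-\tau^{k}$ with $\tau<1/r_1$. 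These geometric rates are swallowed by the decay $|A_i|\le\Gamma\sigma^{-i}$ in the same way the scalar conditions $|\lambda|<r_2$ and $|\lambda|>r_1$ are in the proof of Theorem~\ref{th:matlaurent}, so the relevant double series are absolutely convergent on every compact subannulus, the rearrangement is licit, and $\wt A(z)$ is analytic on $\mathbb A_{r_1,r_2}$.

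Finally, the determinant identity is obtained with no extra difficulty, just as in Theorems~\ref{th:brauergen} and~\ref{th:brauergenpol}: taking determinants in $\wt A(z)=A(z)(I+UX(z)V^*)$ and using $\det(I+UX(z)V^*)=\det(I+X(z)V^*U)=\det(I+X(z))$ together with $I+X(z)=(zI-\Lambda)^{-1}(zI-S)$ yields $\det\wt A(z)=\det(zI-S)\frac{\det A(z)}{\det(zI-\Lambda)}$, which shows that the eigenvalues of $A(z)$ equal to eigenvalues of $\Lambda$ are replaced by those of $S$, all others being unchanged.
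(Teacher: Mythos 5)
Your proposal is correct and follows exactly the route the paper intends for this theorem (which it states without proof, remarking only that it is obtained ``similarly''): the algebraic telescoping of Theorem~\ref{th:brauergenpol} applied separately to the positive and negative powers, combined with the absolute-convergence estimates of Theorem~\ref{th:matlaurent} (correctly augmented by geometric bounds on $\Lambda^{k}$ and $\Lambda^{-k}$ coming from the implicit assumption that the spectrum of $\Lambda$ lies in $\mathbb A_{r_1,r_2}$), plus the standard determinant manipulation. Note only that your computation yields $\Lambda^{-k-1}$ in the coefficients with $i<0$, consistent with the scalar case \eqref{eq:matlaurent}; the exponent $\Lambda^{-k+1}$ printed in the statement appears to be a typo.
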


Observe that if the matrix $S$ in the above theorem has eigenvalues
$\mu_1,\ldots,\mu_m$ where $\mu_1,\ldots,\mu_k\in\mathbb A_{r_1,r_2}$
and $\mu_{k+1},\ldots,\mu_m\not\in\mathbb A_{r_1,r_2}$, then the
matrix function $\wt A(z)$ is still defined and analytic in $\mathbb
A_{r_1,r_2}$. The eigenvalues of $\wt A(z)$ coincide with those of
$A(z)$ except for $\lambda_1,\ldots,\lambda_m$ whose multiplicity is
decreased by 1, moreover, $\wt A(z)$ has the additional eigenvalues
$\mu_1,\ldots,\mu_k\in\mathbb A_{r_1,r_2}$.

\section{Brauer's theorem and canonical factorizations}\label{sec:wh}
Assume we are given a matrix Laurent series $A(z)=\sum_{i\in\mathbb
  Z}z^iA_i$, analytic over $\mathbb A_{r_1,r_2}$, $0<r_1<1<r_2$, which
admits the canonical factorization \cite{bs}
\begin{equation}\label{eq:cf}
A(z)=U(z)L(z^{-1}),\quad 
U(z)=\sum_{i=0}^\infty z^i U_i,\quad L(z)=\sum_{i=0}^\infty z^iL_i,
\end{equation}
where $U(z)$ and $L(z)$ are invertible for $|z|\le1$.  We observe that
from the invertibility of $U(z)$ and $L(z)$ for $|z|\le 1$ it follows
that $\det L_0\ne 0$, $\det U_0\ne 0$. Therefore, the canonical
factorization can be written in the form $A(z)=\wc U(z)K\wc L(z^{-1})$
where $K=U_0L_0$ and $\wc U(z)=U(z)U_0^{-1}$, $\wc L(z)=L_0^{-1}L(z)$.
The factorization in this form is unique \cite{cg:book}.

The goal of this section is to find out if, and under which
conditions, this canonical factorization is preserved under the shift
transformation, and to relate the canonical factorization of $\wt
A(z)$ to that of $A(z)$.

This problem has been recently analyzed in the framework of stochastic
processes in \cite{blm:shift} for a quadratic matrix polynomial and used
to provide an analytic solution of the Poisson problem.

We need a preliminary result whose proof is given in \cite{blm:shift}.

\begin{thm}\label{thm:w}
Let $A(z)=z^{-1}A_{-1}+A_0+zA_1$ be an $n\times n$ matrix Laurent
  polynomial, invertible in the annulus $\mathbb A_{r_1,r_2}$ 
  having the canonical factorization
\[
A(z)=(I-zR_+)K_+(I-z^{-1}G_+).
\]
We have the following properties:
\begin{enumerate}

\item Let $H(z):=A(z)^{-1}$. Then, for $z\in \mathbb A_{r_1,r_2}$ it
  holds that $H(z)=\sum_{i=-\infty}^{+\infty}z^iH_i$ where
\[
H_i=\left\{\begin{array}{ll}
G_+^{-i}H_0,& \hbox{for } i<0,\\
\sum_{j=0}^{+\infty}G_+^j K_+^{-1} R_+^j,&\hbox{for } i=0,\\ 
H_0R_+^i,& \hbox{for } i>0.
\end{array}\right.
\]
\item If $H_0$ is nonsingular, then $A(z^{-1})$ has the canonical
  factorization $A(z^{-1})=(I-z R_-) K_-(I-z^{-1} G_-)$, where
  $K_-=A_0+A_{-1}G_-=A_0+R_- A_{1}$ and $G_-=H_0 R_+ H_0^{-1}$,
  $R_-=H_0^{-1} G_+ H_0$.
\end{enumerate}
\end{thm}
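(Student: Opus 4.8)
The plan is to establish both items by direct manipulation of the canonical factorization $A(z)=(I-zR_+)K_+(I-z^{-1}G_+)$, exploiting that each factor is invertible in a closed neighbourhood of the unit circle so that its inverse has a one-sided power series expansion there. First I would treat item~1. Inverting the factorization gives $H(z)=A(z)^{-1}=(I-z^{-1}G_+)^{-1}K_+^{-1}(I-zR_+)^{-1}$. Because the factorization is canonical and $A$ is invertible on $\mathbb A_{r_1,r_2}$ with $r_1<1<r_2$, the spectral radii of $G_+$ and $R_+$ are strictly less than~$1$, so on the annulus we may expand $(I-z^{-1}G_+)^{-1}=\sum_{p\ge0}z^{-p}G_+^p$ and $(I-zR_+)^{-1}=\sum_{q\ge0}z^qR_+^q$, both absolutely convergent. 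Multiplying the two series and collecting the coefficient of $z^i$ yields
\[
H_i=\sum_{\substack{p,q\ge0\\ q-p=i}}G_+^p\,K_+^{-1}\,R_+^q .
\]
For $i>0$ this sum telescopes to $\bigl(\sum_{p\ge0}G_+^pK_+^{-1}R_+^p\bigr)R_+^i$; for $i<0$ it equals $G_+^{-i}\bigl(\sum_{p\ge0}G_+^pK_+^{-1}R_+^p\bigr)$; and for $i=0$ it is exactly $\sum_{p\ge0}G_+^pK_+^{-1}R_+^p$. Identifying the bracketed sum with $H_0$ gives the three displayed formulas.

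Next I would prove item~2. Since $A(z^{-1})=\sum_i z^{-i}A_i$, a canonical factorization of $A(z^{-1})$ in the stated form would read $A(z^{-1})=(I-zR_-)K_-(I-z^{-1}G_-)$ with $\rho(R_-),\rho(G_-)<1$, and uniqueness of the canonical factorization (cited above via \cite{cg:book}) means it suffices to exhibit matrices $R_-,K_-,G_-$ with the correct spectral radii such that the product reproduces $A(z^{-1})$. I would verify directly that $G_-:=H_0R_+H_0^{-1}$ and $R_-:=H_0^{-1}G_+H_0$ work, with $K_-$ forced by matching coefficients. The quickest route is to invert: the claimed factorization is equivalent to $A(z^{-1})^{-1}=H(z^{-1})=(I-z^{-1}G_-)^{-1}K_-^{-1}(I-zR_-)^{-1}$. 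But from item~1 we already know $H(z^{-1})=\sum_i z^{-i}H_i=\sum_i z^i H_{-i}$, whose negative-power part is $\sum_{i>0}z^{-i}G_+^iH_0$ and whose positive-power part is $\sum_{i>0}z^i H_0R_+^i$. On the other hand expanding $(I-z^{-1}G_-)^{-1}K_-^{-1}(I-zR_-)^{-1}$ and using $G_-=H_0R_+H_0^{-1}$, $R_-=H_0^{-1}G_+H_0$ gives, by the same telescoping as before, a negative-power part $\sum_{i>0}z^{-i}G_-^i\,\bigl(\text{constant}\bigr)=\sum_{i>0}z^{-i}H_0R_+^iH_0^{-1}(\cdots)$ and a positive-power part $\sum_{i>0}z^i(\cdots)H_0^{-1}G_+^iH_0$; matching the $z^{\pm1}$ coefficients against $H(z^{-1})$ pins down the constant block and shows consistency, while the $z^0$ coefficient recovers the formula $K_-^{-1}=\sum_{p\ge0}G_-^p(\cdots)R_-^p$, equivalently $K_-=A_0+A_{-1}G_-=A_0+R_-A_1$ after re-using $A(z)=(I-zR_+)K_+(I-z^{-1}G_+)$ to express $A_0$, $A_{\pm1}$ in terms of $R_+,K_+,G_+$. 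The nonsingularity of $H_0$ is exactly what guarantees $G_-,R_-$ are well defined and similar to $R_+,G_+$, hence $\rho(G_-)=\rho(R_+)<1$ and $\rho(R_-)=\rho(G_+)<1$, so the produced factorization is genuinely canonical.

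The main obstacle I anticipate is not any single hard estimate but the bookkeeping of the coefficient identifications in item~2: one must be careful that the two-sided series for $H(z^{-1})$ is reorganized correctly (the roles of the inner $+$ and outer $-$ indices swap), and that the "constant" middle blocks produced by the telescoping genuinely coincide with $H_0$ and its conjugates rather than merely being similar to them. A clean way to sidestep most of this is to quote item~1 verbatim for $H(z^{-1})$, observe that $H_0$ is the same matrix (it is the $0$-th coefficient, unchanged by $z\mapsto z^{-1}$), and then simply \emph{define} the candidate factors and check the product equals $A(z^{-1})$ termwise; uniqueness then does the rest. I would also note explicitly, as a lemma or remark, why $\rho(R_+)<1$ and $\rho(G_+)<1$ — this follows because $(I-zR_+)$ and $(I-z^{-1}G_+)$ are invertible on $|z|\le 1$ (indeed on a closed annulus containing it), which forces $\det(I-zR_+)\ne0$ for all $|z|\le1$ and hence every eigenvalue of $R_+$ to lie strictly inside the unit disk, and symmetrically for $G_+$; the same argument, applied after the similarity, gives the spectral radius bounds for $R_-$ and $G_-$.
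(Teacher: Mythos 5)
The paper does not actually prove this theorem: it states ``We need a preliminary result whose proof is given in \cite{blm:shift}'', so there is no in-paper argument to compare against. Your strategy --- invert the factorization, expand $(I-z^{-1}G_+)^{-1}$ and $(I-zR_+)^{-1}$ as one-sided series (legitimate since invertibility of the factors on $|w|\le 1$ forces $\rho(R_+),\rho(G_+)<1$), read off the Laurent coefficients of $H(z)$, and then reverse-engineer the factorization of $A(z^{-1})$ from the coefficients of $H(z^{-1})$ --- is the standard and correct route, and item~1 is complete as written.

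Two things in item~2 need repair. First, a bookkeeping slip you yourself half-predicted: in $H(z^{-1})=\sum_j z^jH_{-j}$ the coefficient of $z^{j}$ for $j>0$ is $H_{-j}=G_+^{j}H_0$ and the coefficient of $z^{-j}$ is $H_j=H_0R_+^{j}$; you have stated these the other way around. The final identifications $G_-=H_0R_+H_0^{-1}$, $R_-=H_0^{-1}G_+H_0$ only come out right with the correct assignment (the negative-power part of the candidate inverse is $\sum_{j>0}z^{-j}G_-^{j}M$, which must equal $\sum_{j>0}z^{-j}H_0R_+^{j}$). Second, and more substantively, the middle block is not ``pinned down'' for free: matching all coefficients reduces to solving $\sum_{p\ge0}G_-^{p}K_-^{-1}R_-^{p}=H_0$, i.e.\ $K_-^{-1}=H_0-G_-H_0R_-$, and you must check that this matrix is invertible and that the resulting $K_-$ equals $A_0+A_{-1}G_-=A_0+R_-A_1$. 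The cleanest fix is to bypass the inverse altogether: set $K_-:=A_0+A_{-1}G_-$ and verify directly that $(I-zR_-)K_-(I-z^{-1}G_-)=zA_{-1}+A_0+z^{-1}A_1$. The needed identities $A_1+A_0G_-+A_{-1}G_-^2=0$ and $A_{-1}+R_-A_0+R_-^2A_1=0$ are exactly the $z^{\pm1}$ coefficients of $A(z)H(z)=I$ and $H(z)A(z)=I$ rewritten via $H_{\pm j}$ from item~1, and the consistency $A_{-1}G_-=R_-A_1$ (equivalently $K_-=A_0+R_-A_1$) follows from the identity $G_+H_0R_+=H_0-K_+^{-1}$, which is immediate from the series defining $H_0$ but which your sketch never invokes. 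Invertibility of $K_-$ then comes for free by taking determinants of the verified factorization at a point where $A(z^{-1})$ is nonsingular, and the spectral-radius claims follow from similarity as you say.
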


Now we can prove the following result concerning the canonical factorization of $\wt A(z)$.

\begin{thm}\label{th:rightcan}
Assume that $A(z)=\sum_{i\in\mathbb Z}z^iA_i$ admits the canonical
factorization \eqref{eq:cf}. Let $\lambda\in\mathbb A_{r_1,r_2}$ and
$u\in\mathbb{C}^n$, $u\ne 0$, be such that $A(\lambda)u=0$ and
$|\lambda|<1$.  Let $v\in\mathbb C^n$ be such that $v^*u=1$ and let
$\mu\in\mathbb C$ with $|\mu|<1$. Then the matrix Laurent polynomial
$\wt A(z)=A(z)(I+\frac{\lambda-\mu}{z-\lambda}Q)$, where $Q=uv^*$,
admits the following canonical factorization
\[
\wt A(z)=\wt U(z)\wt L(z^{-1})
\]
where $\wt U(z)=U(z)$, $\wt L(z)=\sum_{i=0}^\infty z^i \wt L_i$, with 
\[
\wt L_0=L_0,~~\wt L_i=L_i-(\lambda-\mu) \sum_{j=1}^\infty \lambda^{-j}L_{j+i-1}Q,\quad i\ge 1.
\]
Moreover, 
$\det\wt L(z^{-1})=\det L(z^{-1})\frac{z-\mu}{z-\lambda}$.
\end{thm}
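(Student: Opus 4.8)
The plan is to work directly with the given canonical factorization, isolating the effect of the shift on the ``$L$-factor''. Write $\wt A(z)=U(z)\,[\,L(z^{-1})(I+\tfrac{\lambda-\mu}{z-\lambda}Q)\,]$; since $\wt U(z)=U(z)$ is already the required right factor, it suffices to show that the bracketed expression is a legitimate left factor, i.e.\ a matrix power series in $z^{-1}$ that is invertible for $|z|\ge 1$. The first step is to transfer the eigenvector relation from $A$ to $L$: because $|\lambda|<1$ and $U(z)$ is invertible for $|z|\le1$, the identity $0=A(\lambda)u=U(\lambda)L(\lambda^{-1})u$ forces $L(\lambda^{-1})u=0$, equivalently $\sum_{k\ge0}\lambda^{-k}L_k u=0$. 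The series converges because $|\lambda^{-1}|<1/r_1$ while the canonical factor $L(z^{-1})$ of a function analytic in $\mathbb A_{r_1,r_2}$ extends analytically to $|z|>r_1$ (so that $L(w)$ is analytic for $|w|<1/r_1$), by the Cauchy-estimate argument used in the proof of Theorem~\ref{th:matpower}.

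Next I would set $\wt L(z^{-1}):=L(z^{-1})(I+\tfrac{\lambda-\mu}{z-\lambda}Q)$, so that $U(z)\wt L(z^{-1})=\wt A(z)$ is immediate, and check that $\wt L(z^{-1})$ is genuinely a power series in $z^{-1}$. Compared with $L(z^{-1})$, the only extra singularity of $z\mapsto L(z^{-1})(I+\tfrac{\lambda-\mu}{z-\lambda}Q)$ is the simple pole at $z=\lambda$, where $r_1<|\lambda|<1$; but $L(z^{-1})Q=L(z^{-1})uv^*$ vanishes at $z=\lambda$ by the previous step, so that pole is removable. Hence $\wt L(z^{-1})$ is analytic on $\{|z|>r_1\}\cup\{\infty\}$ with value $L_0$ at $z=\infty$, and therefore $\wt L(z^{-1})=\sum_{i\ge0}z^{-i}\wt L_i$ with $\wt L_0=L_0$. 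Expanding $\tfrac{\lambda-\mu}{z-\lambda}=(\lambda-\mu)\sum_{k\ge0}\lambda^k z^{-k-1}$ and collecting powers of $z^{-1}$ gives $\wt L_i=L_i+(\lambda-\mu)\lambda^{\,i-1}(\sum_{k=0}^{i-1}\lambda^{-k}L_k)Q$ for $i\ge1$; replacing the finite sum $\sum_{k=0}^{i-1}\lambda^{-k}L_kQ$ by $-\sum_{k\ge i}\lambda^{-k}L_kQ$ (using $L(\lambda^{-1})Q=0$) and reindexing $k=j+i-1$ produces exactly $\wt L_i=L_i-(\lambda-\mu)\sum_{j\ge1}\lambda^{-j}L_{j+i-1}Q$.

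It then remains to verify that the factorization is canonical and to read off the determinant identity. Taking determinants, $\det\wt L(z^{-1})=\det L(z^{-1})\det(I+\tfrac{\lambda-\mu}{z-\lambda}uv^*)=\det L(z^{-1})(1+\tfrac{\lambda-\mu}{z-\lambda})=\det L(z^{-1})\tfrac{z-\mu}{z-\lambda}$, which is the claimed formula. For $|z|\ge1$ we have $\det L(z^{-1})\ne0$, while $\tfrac{z-\mu}{z-\lambda}$ is finite and nonzero there (with value $1$ at $z=\infty$) since $|\mu|<1$ and $|\lambda|<1$; together with $\det\wt L_0=\det L_0\ne0$ this gives that $\wt L(z^{-1})$ is analytic and invertible for $|z|\ge1$. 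As $\wt U(z)=U(z)$ is analytic and invertible for $|z|\le1$, the identity $\wt A(z)=\wt U(z)\wt L(z^{-1})$ is a canonical factorization.

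The only genuinely delicate point is the passage between the two forms of $\wt L_i$: the honest expansion yields the finite sum $\sum_{k=0}^{i-1}\lambda^{-k}L_k$, and rewriting it as the infinite tail appearing in the statement is precisely where $L(\lambda^{-1})u=0$ is used a second time, together with the convergence of that tail, which again relies on $|\lambda^{-1}|<1/r_1$ and hence on $\lambda$ lying inside the annulus. Everything else is routine manipulation of the factorization and the determinant multiplicativity already used in Theorem~\ref{th:matlaurent}.
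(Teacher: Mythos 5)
Your proof is correct and follows essentially the same route as the paper: deduce $L(\lambda^{-1})u=0$ from the invertibility of $U(\lambda)$, set $\wt L(z^{-1})=L(z^{-1})(I+\frac{\lambda-\mu}{z-\lambda}Q)$, and use the determinant identity together with $|\lambda|,|\mu|<1$ to conclude the factorization is canonical. The only difference is cosmetic: the paper obtains the coefficients $\wt L_i$ by citing Theorem~\ref{th:matlaurent} applied to $L(z^{-1})$, whereas you rederive them by hand via the removable pole at $z=\lambda$ and the tail identity $\sum_{k=0}^{i-1}\lambda^{-k}L_kQ=-\sum_{k\ge i}\lambda^{-k}L_kQ$, which yields the same formulas.
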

\begin{proof}
Observe that the condition $A(\lambda)u=0$ implies that
$U(\lambda)L(\lambda^{-1})u=0$. Since $|\lambda|<1$ then $U(\lambda)$
is nonsingular, therefore $L(\lambda^{-1})u=0$. Define $\wt
L(z^{-1})=L(z^{-1})(I+\frac{\lambda-\mu}{z-\lambda}Q)$.  In view of
Theorem \ref{th:matlaurent}, the matrix function $\wt L(z^{-1})$ is a
matrix power series in $z^{-1}$, obtained by applying the right shift
to $L(z^{-1})$. Moreover, the expression for its matrix coefficients
follows from \eqref{eq:matlaurent}.  By taking the determinant of $\wt
L(z)$ one
obtains $\det\wt L(z^{-1})=\det L(z^{-1})\det
(I+\frac{\lambda-\mu}{z-\lambda}Q)= \det L(z^{-1})\frac{z-\mu}{z-\lambda}$.  Therefore, since $\det \wt L(z)=\det L(z)\frac{z^{-1}-\mu}{z^{-1}-\lambda}$, the function $\wt L(z)$ is
nonsingular for $|z|\le 1$, so that $\wt A(z)=\wt U(z)\wt L(z^{-1})$ is a canonical factorization.
\end{proof}

The above theorem relates the canonical factorization of $A(z)$ with
the canonical factorization of the shifted function $\wt
A(z)$. Assume we are given a canonical factorization of $A(z^{-1})$,
it is a natural question to figure out if also $\wt A(z^{-1})$
admits a canonical factorization, and if it is related to the one of
$A(z^{-1})$. This issue is motivated by applications in the solution
of the Poisson problem in stochastic models, where both the canonical
factorizations of $\wt A(z)$ and $\wt A(z^{-1})$ are
needed for the existence of the solution and for providing its explicit expression
\cite{bdlm:poisson}.

We give answer to this question in the case of matrix Laurent polynomials of the kind $A(z)=z^{-1}A_{-1}+A_0+zA_1$.

\begin{thm}\label{thm:crspr}
Let $A(z)=z^{-1}A_{-1}+A_0+zA_1$ be analytic in the annulus $\mathbb
A_{r_1,r_2}$.  Assume that $A(z)$ and $A(z^{-1})$ admit the canonical
factorizations
\[
\begin{split}
&A(z)=(I-zR_{+})K_{+}(I-z^{-1}G_{+}),\\
&A(z^{-1})=(I-zR_{-})K_{-}(I-z^{-1}G_{-}).
\end{split}
\]
Let $\lambda\in\mathbb A_{r_1,r_2}$, $|\lambda|<1$, and $u\in\mathbb C^n$, $u\ne 0$, 
be  such that $A(\lambda)u=0$. Let $\mu\in\mathbb C$, $|\mu|<1$, and
$v$ be any vector such that: $v^*u=1$,
$(\lambda-\mu) v^* G_- u \ne 1$.
Set  $Q=uv^*$,  and
define the matrix Laurent polynomial 
$\wt A(z)=z^{-1}\wt A_{-1}+\wt A_0+z\wt A_1$ as in \eqref{eq:shifted}. 
Then $\wt A_{-1}=A_{-1}+(\lambda-\mu)(A_0+\lambda A_1)Q$, $\wt A_0=A_0+(\lambda-\mu)A_1Q$, $\wt A_1=A_1$, moreover
$\wt A(z)$ and $\wt A(z^{-1})$ 
have the canonical factorizations
\begin{equation}\label{slfactpr}
\begin{split}
&\wt A(z)=(I-z\wt R_{+})\wt K_{+}(I-z^{-1}\wt G_{+}),\\
&\wt A(z^{-1})=(I-z\wt R_{-})\wt K_{-}(I-z^{-1}\wt G_{-}),
\end{split}
\end{equation}
where
$\wt R_{+}=R_{+}$, $\wt K_{+}=K_{+}$, $\wt G_{+}=G_{+}+(\mu-\lambda)Q$,
$\wt R_{-}=\wt W^{-1}\wt G_{+} \wt W$, $\wt G_{-}=\wt W R_+ \wt W^{-1}$, 
$\wt K_-= \wt A_{0}+\wt A_{-1}\wt G_{-}= \wt A_0+\wt R_{-} A_{1}$,
  with $\wt W=W+(\mu-\lambda)  QWR_{+}$, $W=\sum_{i=0}^\infty G_+^iK_+^{-1}R_+^i$.
\end{thm}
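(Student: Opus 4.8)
The plan is to reduce the statement to three results already at our disposal: Theorem~\ref{th:matlaurent} for the coefficients of $\wt A(z)$, Theorem~\ref{th:rightcan} for the canonical factorization of $\wt A(z)$, and Theorem~\ref{thm:w}, applied to $\wt A(z)$ in place of $A(z)$, for the canonical factorization of $\wt A(z^{-1})$. The formulas for $\wt A_{-1},\wt A_0,\wt A_1$ follow at once from \eqref{eq:matlaurent}: only $A_{-1},A_0,A_1$ are nonzero, so the series there truncate and give $\wt A_1=A_1$, $\wt A_0=A_0+(\lambda-\mu)A_1Q$, $\wt A_{-1}=A_{-1}-(\lambda-\mu)\lambda^{-1}A_{-1}Q$; the stated form of $\wt A_{-1}$ is then obtained from $A(\lambda)u=0$, i.e. $A_{-1}u=-\lambda(A_0+\lambda A_1)u$, which lets us rewrite $A_{-1}Q=A_{-1}uv^*=-\lambda(A_0+\lambda A_1)Q$.

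For the canonical factorization of $\wt A(z)$, write the given factorization of $A(z)$ as $U(z)L(z^{-1})$ with $U(z)=I-zR_{+}$ and $L(z^{-1})=K_{+}(I-z^{-1}G_{+})$, so that $L_0=K_{+}$ and $L_1=-K_{+}G_{+}$. The decisive observation is that $G_{+}u=\lambda u$: since $I-zR_{+}$ is invertible for $|z|\le1$ we have $\rho(R_{+})<1$, hence $I-\lambda R_{+}$ is invertible (as $|\lambda|<1$), and $A(\lambda)u=0$ then forces $K_{+}(I-\lambda^{-1}G_{+})u=0$, i.e. $G_{+}u=\lambda u$. Now Theorem~\ref{th:rightcan} gives $\wt U(z)=U(z)$ and $\wt L(z)=K_{+}+z\wt L_1$ with $\wt L_1=L_1-(\lambda-\mu)\lambda^{-1}L_1Q$; using $L_1=-K_{+}G_{+}$ together with $K_{+}G_{+}Q=\lambda K_{+}Q$ (a consequence of $G_{+}u=\lambda u$), this collapses to $\wt L_1=-K_{+}(G_{+}+(\mu-\lambda)Q)$, so that $\wt L(z^{-1})=K_{+}(I-z^{-1}\wt G_{+})$ with $\wt G_{+}=G_{+}+(\mu-\lambda)Q$. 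The determinant identity of Theorem~\ref{th:rightcan}, combined with $|\lambda|,|\mu|<1$, shows $\wt L(z)$ is nonsingular for $|z|\le1$; hence $\wt A(z)=(I-z\wt R_{+})\wt K_{+}(I-z^{-1}\wt G_{+})$ with $\wt R_{+}=R_{+}$ and $\wt K_{+}=K_{+}$ is a canonical factorization, yielding the first identity in \eqref{slfactpr}.

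For the canonical factorization of $\wt A(z^{-1})$, note that $\wt A(z)=z^{-1}\wt A_{-1}+\wt A_0+z\wt A_1$ is a Laurent polynomial, invertible on an annulus around $|z|=1$ (its determinant equals $\det A(z)\frac{z-\mu}{z-\lambda}$ and $|\lambda|,|\mu|<1$), so Theorem~\ref{thm:w} applies to it with the factorization just obtained. By Part~2 of that theorem, once the constant Laurent coefficient $\wt H_0$ of $\wt A(z)^{-1}$ is known to be nonsingular, $\wt A(z^{-1})$ has a canonical factorization $(I-z\wt R_{-})\wt K_{-}(I-z^{-1}\wt G_{-})$ with $\wt G_{-}=\wt H_0R_{+}\wt H_0^{-1}$, $\wt R_{-}=\wt H_0^{-1}\wt G_{+}\wt H_0$, and $\wt K_{-}=\wt A_0+\wt A_{-1}\wt G_{-}=\wt A_0+\wt R_{-}A_1$ (recall $\wt A_1=A_1$). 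Two things then remain: to compute $\wt H_0$, identifying it with $\wt W$, and to verify it is nonsingular. For the computation I would use $\wt A(z)^{-1}=\left(I+\frac{\lambda-\mu}{z-\lambda}Q\right)^{-1}A(z)^{-1}=\left(I-\frac{\lambda-\mu}{z-\mu}Q\right)A(z)^{-1}$, the second equality being Sherman--Morrison ($v^*u=1$); expanding $\frac{1}{z-\mu}$ in nonnegative powers of $z^{-1}$ near $|z|=1$ and using Part~1 of Theorem~\ref{thm:w} for $A(z)$, so that $H_i=WR_{+}^{i}$ for $i\ge1$ and $H_0=W$, the constant term comes out as an explicit rank-one update of $W$ of the form $W+(\mu-\lambda)\,uv^*WR_{+}(\cdots)$, to be identified with $\wt W$; equivalently, one may expand the powers of $\wt G_{+}=G_{+}+(\mu-\lambda)uv^*$ using $G_{+}u=\lambda u$ and sum $\sum_{j\ge0}\wt G_{+}^{\,j}K_{+}^{-1}R_{+}^{\,j}$ directly. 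Finally, since $\wt H_0$ is a rank-one update of $W$ and $G_{-}=WR_{+}W^{-1}$ (Part~2 of Theorem~\ref{thm:w} for $A$, which also gives invertibility of $W$), the matrix determinant lemma shows that $\wt H_0$ is nonsingular precisely when a certain explicit scalar is different from $1$, and the hypothesis $(\lambda-\mu)v^*G_{-}u\ne1$ on $v$ is exactly what secures this; Theorem~\ref{thm:w} then delivers the second identity in \eqref{slfactpr} with the stated formulas for $\wt R_{-},\wt K_{-},\wt G_{-}$.

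The step I expect to be the main obstacle is the explicit evaluation of $\wt H_0$ and the precise identification of the scalar whose non-vanishing is equivalent to invertibility of $\wt H_0$; once that scalar is matched with the assumed condition on $v$, the rest — rewriting $\wt G_{-},\wt R_{-},\wt K_{-}$ in the closed form claimed in the statement — is routine manipulation using $\wt G_{+}=G_{+}+(\mu-\lambda)Q$, $\wt R_{+}=R_{+}$, $\wt K_{+}=K_{+}$ and the definition of $\wt W$.
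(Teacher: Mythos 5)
Your reduction to Theorems~\ref{th:matlaurent}, \ref{th:rightcan} and \ref{thm:w} is exactly the route the paper takes: the coefficient formulas (including rewriting $\wt A_{-1}$ via $A_{-1}Q=-\lambda(A_0+\lambda A_1)Q$), the observation $G_+u=\lambda u$, and the derivation of the first factorization in \eqref{slfactpr} all match the paper's proof and are correct. The genuine gap is in the third part: everything that is specific to this theorem --- the closed form of $\wt W$ and the fact that the hypothesis $(\lambda-\mu)v^*G_-u\ne 1$ is precisely the nonsingularity condition demanded by Theorem~\ref{thm:w} --- lives in the computation you defer (``the constant term comes out as \dots $(\cdots)$, to be identified with $\wt W$''). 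As written you prove only that a canonical factorization of $\wt A(z^{-1})$ exists \emph{provided} $\wt H_0$ is nonsingular; you establish neither the stated formulas nor that the stated hypothesis on $v$ suffices. The paper closes this by expanding $\wt G_+^i$ in terms of $G_+^i$, summing $\wt W=\sum_i\wt G_+^i K_+^{-1}R_+^i$ explicitly, and then computing $\det\wt W=\det(I-(\lambda-\mu)QWR_+W^{-1})\det W$ together with $G_-=WR_+W^{-1}$.

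Before you carry that computation out, a concrete warning: your two proposed routes do not land on the same expression, and the Sherman--Morrison one does not land on the $\wt W$ of the statement. Indeed $\wt A(z)^{-1}=\bigl(I-\frac{\lambda-\mu}{z-\mu}Q\bigr)A(z)^{-1}$, and expanding $\frac{1}{z-\mu}=\sum_{k\ge 0}\mu^k z^{-k-1}$ with $H_{k+1}=WR_+^{k+1}$ gives $\wt H_0=W+(\mu-\lambda)QW(I-\mu R_+)^{-1}R_+$, which coincides with $W+(\mu-\lambda)QWR_+$ only when $\mu=0$ or $\mu=\lambda$. The paper's own summation rests on the identity $\wt G_+^i=G_+^i+(\mu-\lambda)QG_+^{i-1}$, which holds for $i=1$ but fails for $i\ge 2$: using $G_+Q=\lambda Q$ and $Q^2=Q$ one finds $\wt G_+^2=G_+^2+(\mu-\lambda)QG_+ +(\mu-\lambda)\mu Q$. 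The scalar case $n=1$, $A(z)=(1-zr)k(1-z^{-1}g)$, $\lambda=g$, already shows the discrepancy: there $\sum_i\wt G_+^iK_+^{-1}R_+^i=\frac{1}{k(1-\mu r)}$ while $W+(\mu-g)WR_+=\frac{1+(\mu-g)r}{k(1-gr)}$, and these differ unless $\mu\in\{0,g\}$ or $r=0$. So the ``identification with $\wt W$'' you postpone is not merely laborious: for general $\mu$ it cannot be completed as stated, and you will need either to restrict to $\mu=0$ or to work with the corrected $\wt W$ (and the correspondingly corrected scalar condition $(\lambda-\mu)v^*W(I-\mu R_+)^{-1}R_+W^{-1}u\ne 1$ on $v$) for the argument to go through.
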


\begin{proof}
The expressions for the matrix coefficients $\wt A_i$, $i=-1,0,1$, follow from Theorem \ref{th:matlaurent}.
Observe that, since $|\lambda|<1$, from the definition of canonical factorization, it follows that the matrix $I-\lambda R_{+}$ is not singular. Therefore, since $A(\lambda)u=0$ and also $K_+$ is nonsingular, from  
the canonical factorization of $A(z)$, we have $(I-\lambda^{-1}G_{+})u=0$. 
The existence and the expression of the canonical factorization of $\wt A(z)$ can be obtained from Theorem 
\ref{th:rightcan} by setting $U(z)=(I-zR_{+})K_{+}$ and $L(z)=I-zG_{+}$. 
In fact, Theorem 
\ref{th:rightcan} implies that $\wt A(z)$ has the canonical factorization $\wt A(z)=\wt U(z) \wt L(z)$ with $\wt U(z)=U(z)$ and $\wt L(z)=\wt L_0+z \wt L_1$, where $\wt L_0=L_0=I$ and $\wt L_1= L_1-(\lambda-\mu)\lambda^{-1}L_1Q=-G_{+}+(\lambda-\mu)\lambda^{-1}G_{+}Q$. Since $\lambda^{-1}G_{+}Q=Q$, we find that $\wt L(z)=I-z(G_{+}+(\mu-\lambda)Q)$.
To prove that $\wt A(z^{-1})$ has a canonical factorization, we apply Theorem \ref{thm:w}  to $\wt A(z)$.
To this purpose, we show that the matrix $\wt W=\sum_{i=0}^{+\infty}\wt G_{+}^i\wt K_{+}^{-1}\wt R_{+}^i$ is nonsingular.
Observe that $\wt G_{+}^i=G_{+}^i+(\mu-\lambda) QG_{+}^{i-1}$, for $i\ge 1$. Therefore, since $\wt R_{+}=R_{+}$ and $\wt K_{+}=K_{+}$, we may write
\[
\begin{split}
\wt W=&K_{+}^{-1}+\sum_{i=1}^{+\infty}(G_{+}^i+(\mu-\lambda) QG_{+}^{i-1})K_{+}^{-1}R_{+}^i\\
=&K_{+}^{-1}+\sum_{i=1}^{+\infty}G_{+}^iK_{+}^{-1}R_{+}^i+(\mu-\lambda) Q\left(\sum_{i=0}^{+\infty}G_{+}^iK_{+}^{-1}R_{+}^i \right) R_{+}\\
=&W-(\lambda-\mu) QWR_{+}.
\end{split}
\]
By Theorem 3.22 of \cite{blm:book} applied to $A(z)$,  we have
$\det W\ne 0$.
Therefore $\det 
\wt W=\det(I-(\lambda-\mu) QWR_{+}W^{-1})\det W$. Moreover, since $Q=u v^*$, then
the matrix $I-(\lambda-\mu) QWRW^{-1}$ is nonsingular if and only if 
\begin{equation}\label{eq:tmp}
(\lambda-\mu)
v^* WR_{+}W^{-1} u\ne 1.
\end{equation}
   Since  $G_{-}=WR_{+}W^{-1}$, for Theorem \ref{thm:w} where $W=H_0$, 
condition \eqref{eq:tmp}
holds if $(\lambda-\mu)v^* G_{-} u\ne 1$, which is satisfied by assumption. Therefore
 the matrix $\wt W$ is nonsingular, so that
 from Theorem \ref{thm:w} applied to  the matrix Laurent polynomial   $\wt A(z)$, we deduce that $\wt A(z^{-1})$ has the  canonical factorization 
$\wt A(z^{-1}) =(I-z\wt R_{-})\wt K_{-}(I-z^{-1}\wt G_{-})$ with
$\wt K_{-}=\wt A_0+\wt A_{-1}\wt G_{-}=\wt A_0+\wt R_{-} A_{1}$ and $\wt G_{-}=\wt W
  R_{+}  \wt W^{-1}$,  $\wt R_{-}= \wt W^{-1} \wt G_{+} \wt W$. 
\end{proof}

\section{Some specific cases and applications}\label{sec:specific}
\subsection{Left, right, and double shift}
Observe that the Brauer theorem and its extensions can be applied to the matrix $A^T$ or to the matrix polynomial $A(z)^T$. In this case we need to know a left eigenvector or a left invariant subspace of $A(z)$. The result is that we may shift a group of eigenvalues relying on the knowledge of a left invariant subspace. We refer to this procedure as to {\em left shift}.   It is interesting to point out that right shift and left shift can be combined together. This combination is particularly convenient when dealing with palindromic polynomials. In fact, by shifting a group of eigenvalues by means of the right shift and the reciprocals of these eigenvalues by means of a left shift, we may preserve the palindromic structure.

Performing the double shift is also necessary when we are given a canonical factorization of $A(z)$ and we have to shift both an eigenvalue of modulus greater than 1 and an eigenvalue of modulus less than 1. This situation is encountered in the analysis of certain stochastic processes.

We describe the left shift as follows.

\begin{thm}\label{th:ls}
Let $A(z)=\sum_{i\in\mathbb Z}z^i A_i$ be a matrix Laurent series analytic in the annulus $\mathbb A_{r_1,r_2}$.
Let $\lambda\in\mathbb A_{r_1,r_2}$,   be an eigenvalue of $A(z)$ such that  $v^*A(\lambda)=0$ for a nonzero vector $v\in\mathbb C^n$. Given $\mu\in\mathbb C$ define the matrix function
\[
\wt A(z)=\left(I+\frac{\lambda-\mu}{z-\lambda}S\right)A(z),\quad S=yv^*
\]
where $y\in\mathbb C^n$ is such that $v^*y=1$. Then $\wt A(z)$ admits the power series expansion
$\wt A(z)=\sum_{i\in\mathbb Z}z^i\wt A_i $,
convergent for $z\in\mathbb A_{r_1,r_2}$, where
\[\begin{split}
&\wt A_i=A_i+(\lambda-\mu)\sum_{j=0}^{\infty}\lambda^j S A_{i+j+1},\quad \hbox{ for }i\ge 0,\\
&\wt A_i=A_i-(\lambda-\mu)\sum_{j=0}^{\infty}\lambda^{-j-1}S A_{i-j}, \quad \hbox{ for }i<0.
\end{split}
\]
Moreover, we have
$\det \wt A(z)=\det A(z)\frac{z-\mu}{z-\lambda}$. 
\end{thm}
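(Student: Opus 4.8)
The plan is to mirror the proof of Theorem~\ref{th:matlaurent}, observing that the left shift is nothing but the right shift applied to the transposed function. Concretely, set $B(z)=A(z)^T$; then $v^*A(\lambda)=0$ reads $B(\bar\lambda)\bar v=0$... but to avoid conjugation nuisances it is cleaner to argue directly. First I would write $\wt A(z)=(I+\frac{\lambda-\mu}{z-\lambda}S)A(z)=A(z)+\frac{\lambda-\mu}{z-\lambda}S\,A(z)$, and then use $v^*A(\lambda)=0$, which gives $S\,A(\lambda)=y v^*A(\lambda)=0$, to replace $A(z)$ by $A(z)-A(\lambda)$ in the second term:
\[
\wt A(z)=A(z)+\frac{\lambda-\mu}{z-\lambda}S\,(A(z)-A(\lambda)).
\]
This is the crucial cancellation that makes the apparent pole at $z=\lambda$ removable.

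Next I would expand $A(z)-A(\lambda)=\sum_{i\ge 1}(z^i-\lambda^i)A_i+\sum_{i\ge 1}(z^{-i}-\lambda^{-i})A_{-i}$ and factor out $(z-\lambda)$ using $z^i-\lambda^i=(z-\lambda)\sum_{j=0}^{i-1}\lambda^{i-j-1}z^j$ and $z^{-i}-\lambda^{-i}=-\frac{1}{z\lambda}(z-\lambda)\sum_{j=0}^{i-1}\lambda^{-(i-j-1)}z^{-j}$, exactly as in \eqref{eq:atproof2}. After cancelling $(z-\lambda)$ this yields
\[
\wt A(z)=A(z)+(\lambda-\mu)S\,S_+(z)-\frac{\lambda-\mu}{z\lambda}S\,S_-(z),
\]
with the same series $S_+(z)=\sum_{i\ge1}\sum_{j=0}^{i-1}\lambda^{i-j-1}z^jA_i$ and $S_-(z)=\sum_{i\ge1}\sum_{j=0}^{i-1}\lambda^{-(i-j-1)}z^{-j}A_{-i}$ that appear in the proof of Theorem~\ref{th:matlaurent}. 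The convergence of $S_+(z)$ and $S_-(z)$ in $\mathbb A_{r_1,r_2}$, together with the justification for exchanging the order of summation, is word-for-word the argument already carried out there (bounding $|A_i|\le\Gamma\sigma^{-i}$ and comparing with a geometric-type series); I would simply invoke it rather than repeat it. Re-indexing the double sums gives $S_+(z)=\sum_{i\ge0}z^i\sum_{k\ge0}\lambda^kA_{k+i+1}$ and $S_-(z)=\sum_{i\ge0}z^{-i}\sum_{k\ge0}\lambda^{-k}A_{-(k+i+1)}$, and substituting these back, collecting powers of $z$, and absorbing the factor $-\frac1{z\lambda}$ into the negative-index part produces exactly the stated coefficient formulas for $\wt A_i$ with $i\ge0$ and $i<0$, with $S$ multiplying on the left.

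Finally, for the determinant I would take determinants on both sides of $\wt A(z)=(I+\frac{\lambda-\mu}{z-\lambda}S)A(z)$ and use $\det(I+\frac{\lambda-\mu}{z-\lambda}yv^*)=1+\frac{\lambda-\mu}{z-\lambda}v^*y=1+\frac{\lambda-\mu}{z-\lambda}=\frac{z-\mu}{z-\lambda}$, giving $\det\wt A(z)=\det A(z)\frac{z-\mu}{z-\lambda}$. I do not anticipate a serious obstacle here, since everything reduces to the already-established Theorem~\ref{th:matlaurent}; the only point needing a little care is the bookkeeping when shifting the factor $S$ to the left of the sums (as opposed to the right, as for $Q$ in the right shift) and keeping the index shift $A_{-k+i}$ versus $A_{k+i+1}$ straight in the two cases. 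If one prefers a one-line proof, one can instead note that $\wt A(z)^T=A(z)^T(I+\frac{\lambda-\mu}{z-\lambda}\bar v y^T)$ is the right shift of $A(z)^T$ with right eigenvector $\bar v$, quote Theorem~\ref{th:matlaurent} verbatim, and transpose back; the direct computation above is given for completeness.
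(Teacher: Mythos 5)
Your proposal is correct and matches the paper: the paper's entire proof of this theorem is the one-line remark you give at the end (apply Theorem~\ref{th:matlaurent} to $A(z)^T$), and your direct computation is simply that argument unfolded, using the same cancellation $S\,A(z)=S\,(A(z)-A(\lambda))$ and the same series $S_+(z)$, $S_-(z)$ as in the proof of Theorem~\ref{th:matlaurent}. Both the coefficient formulas and the determinant identity come out exactly as you state, so there is nothing to add.
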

\begin{proof}
It is sufficient to apply Theorem \ref{th:matlaurent} to the matrix function $A(z)^T$.
\end{proof}

We provide a brief description of the double shift where two eigenvalues $\lambda_1$ and $\lambda_2$ are shifted by means of a right and a left shift respectively.
We restrict our attention to the case where $\lambda_1\ne\lambda_2$. The general case requires a more accurate analysis and is not treated here. 

\begin{thm}\label{th:ds}
Let $A(z)=\sum_{i\in\mathbb Z}z^i A_i$ be a matrix Laurent series analytic in the annulus $\mathbb A_{r_1,r_2}$.
Let $\lambda_1,\lambda_2\in\mathbb A_{r_1,r_2}$, $\lambda_1\ne\lambda_2$,  be two eigenvalues of $A(z)$ such that $A(\lambda_1)u=0$, $v^*A(\lambda_2)=0$ for nonzero vectors $u,v\in\mathbb C^n$. Given $\mu_1,\mu_2\in\mathbb C$ define the matrix function
\begin{equation}\label{eq:atds}
\wt A(z)=\left(I+\frac{\lambda_2-\mu_2}{z-\lambda_2}S\right)A(z)\left(I+\frac{\lambda_1-\mu_1}{z-\lambda_1}Q\right),\quad Q=uw^*,\quad S=yv^*
\end{equation}
where $w,y\in\mathbb C^n$ are such that $w^*u=1$, $v^*y=1$. Then $\wt A(z)$ admits the power series expansion
$\wt A(z)=\sum_{i\in\mathbb Z}z^i\wt A_i$,
convergent for $z\in\mathbb A_{r_1,r_2}$, where
\begin{equation}\label{eq:at}\begin{split}
&\wt A_i=\wh A_i+(\lambda_2-\mu_2)\sum_{j=0}^{\infty}\lambda_2^j S \wh A_{i+j+1},\quad \hbox{ for }i\ge 0,\\
&\wt A_i=\wh A_i-(\lambda_2-\mu_2)\sum_{j=0}^{\infty}\lambda_2^{-j-1}S \wh A_{i-j}, \quad \hbox{ for }i<0,
\end{split}
\end{equation}
with
\begin{equation}\label{eq:ah}
\begin{split}
&\wh A_j=A_j+(\lambda_1-\mu_1)\sum_{k=0}^{\infty}\lambda_1^kA_{k+j+1}Q,\quad \hbox{ for }j\ge 0,\\
&\wh A_j=A_j-(\lambda_1-\mu_1)\sum_{k=0}^{\infty}\lambda_1^{-k-1}A_{-k+j}Q, \quad \hbox{ for }j<0.
\end{split}
\end{equation}
Moreover, we have
$\det \wt A(z)=\det A(z)\frac{z-\mu_1}{z-\lambda_1}\frac{z-\mu_2}{z-\lambda_2}$. 
\end{thm}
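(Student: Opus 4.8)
The plan is to realize the double shift as a right shift followed by a left shift, each already handled by the single-eigenvalue results. Introduce the intermediate matrix function
\[
\wh A(z)=A(z)\left(I+\frac{\lambda_1-\mu_1}{z-\lambda_1}Q\right),\qquad Q=uw^*,
\]
so that, by associativity of matrix multiplication, $\wt A(z)=\bigl(I+\frac{\lambda_2-\mu_2}{z-\lambda_2}S\bigr)\wh A(z)$, which is exactly \eqref{eq:atds}. Since $A(\lambda_1)u=0$ and $w^*u=1$, Theorem \ref{th:matlaurent} applies to $\wh A(z)$ with data $\lambda_1$, eigenvector $u$, target $\mu_1$, and vector $w$: it tells us that $\wh A(z)$ is analytic in $\mathbb A_{r_1,r_2}$, that it equals the Laurent series $\sum_{i\in\mathbb Z}z^i\wh A_i$ with coefficients given precisely by \eqref{eq:ah}, and that $\det\wh A(z)=\det A(z)\,\frac{z-\mu_1}{z-\lambda_1}$; in particular $\wh A(z)$ is nondegenerate.

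The one point requiring attention before the second (left) shift can be performed is that $v$ must still be a left null vector of the intermediate function at $\lambda_2$. This is exactly where the hypothesis $\lambda_1\ne\lambda_2$ is used: evaluating the defining product at $z=\lambda_2$, which is legitimate since $\lambda_2\ne\lambda_1$, gives
\[
v^*\wh A(\lambda_2)=v^*A(\lambda_2)\left(I+\frac{\lambda_1-\mu_1}{\lambda_2-\lambda_1}Q\right)=0,
\]
because $v^*A(\lambda_2)=0$. Hence $\lambda_2\in\mathbb A_{r_1,r_2}$ is an eigenvalue of $\wh A(z)$ with left eigenvector $v$, and $v^*y=1$ by hypothesis.

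We are therefore in a position to apply Theorem \ref{th:ls} to the nondegenerate Laurent series $\wh A(z)$, with eigenvalue $\lambda_2$, left eigenvector $v$, target $\mu_2$, and vector $y$ (so that $S=yv^*$). It yields that $\wt A(z)=\bigl(I+\frac{\lambda_2-\mu_2}{z-\lambda_2}S\bigr)\wh A(z)$ is analytic in $\mathbb A_{r_1,r_2}$, that it coincides with the Laurent series $\sum_{i\in\mathbb Z}z^i\wt A_i$ whose coefficients are obtained from those of $\wh A(z)$ exactly by the formulas \eqref{eq:at}, and that $\det\wt A(z)=\det\wh A(z)\,\frac{z-\mu_2}{z-\lambda_2}$. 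Combining this last identity with the determinant formula for $\wh A(z)$ obtained in the first step gives $\det\wt A(z)=\det A(z)\,\frac{z-\mu_1}{z-\lambda_1}\,\frac{z-\mu_2}{z-\lambda_2}$, and the nested expressions \eqref{eq:ah}–\eqref{eq:at} are precisely what the two theorems produce, completing the argument.

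Essentially all of this is bookkeeping of two previously established transformations; the only genuine obstacle is the verification $v^*\wh A(\lambda_2)=0$, and that is why the statement is restricted to $\lambda_1\ne\lambda_2$. When $\lambda_1=\lambda_2$, the right shift at $\lambda_1$ generically destroys the left null vector $v$ at the same point — the two rank-one corrections act at a common singularity, so that $v^*\wh A(\lambda_1)$ need not vanish — and the clean factorization of the double shift into an independent right shift and left shift no longer applies; that is the more delicate "general case" deferred by the statement.
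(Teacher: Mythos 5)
Your proof is correct and follows essentially the same route as the paper's: factor the double shift as the right shift producing $\wh A(z)$ (handled by Theorem \ref{th:matlaurent}), verify $v^*\wh A(\lambda_2)=0$ using $\lambda_1\ne\lambda_2$, then apply the left shift Theorem \ref{th:ls} and multiply the two determinant identities. The additional remarks you make about why $\lambda_1=\lambda_2$ is excluded match the paper's deferral of that case.
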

\begin{proof}
 Denote $\wh A(z)=A(z)(I+\frac{\lambda_1-\mu_1}{z-\lambda_1}Q),\quad Q=uw^*$. In view of Theorem \ref{th:matlaurent}, the matrix function $\wh A(z)$ is analytic in $\mathbb A_{r_1,r_2}$, moreover $\wh A(z)=\sum_{i\in\mathbb Z}z^i\wh A_i$ where $\wh A_i$ are defined in \eqref{eq:ah}. We observe that $v^*\wh A(\lambda_2)=v^*A(\lambda_2)
(I+\frac{\lambda_1-\mu_1}{\lambda_2-\lambda_1}Q)=0$ since $v^*A(\lambda_2)=0$ and 
$\lambda_1\ne\lambda_2$. Thus,  we may apply Theorem \ref{th:ls}
to the matrix function $\wh A(z)$ to shift the eigenvalue 
$\lambda_2$ to $\mu_2$ and get the shifted function $\wt A(z)=\sum_{i\in\mathbb Z}z^i\wt A_i$ where the coefficients $\wt A_i$ are given in \eqref{eq:at}.
The expression for the determinant follows by computing the determinant of both sides of \eqref{eq:atds}.
\end{proof}

In the above theorem, one can prove that applying the left shift followed by the right shift provides the same matrix Laurent series.

Observe that, if $A(z)$ is a matrix Laurent polynomial, i.e., 
$A(z)=\sum_{i=-h}^k z^iA_i$ then $\wt A_i=0$ for $i>k$ and $i<-h$, i.e., also $\wt A(z)$ is a matrix Laurent polynomial. In particular, if $A(z)= z^{-1}A_{-1}+A_0+zA_1$, then $\wt A(z)=\sum_{i=-1}^1 z^i\wt A_i$ with 
\[
\begin{split}
&\wt A_{1}=A_1,\\
& \wt A_0=A_0+(\lambda_1-\mu_1)A_1Q+(\lambda_2-\mu_2)SA_1,\\
&\wt A_{-1}=A_{-1}-(\lambda_1-\mu_1)\lambda_1^{-1}A_{-1}Q-(\lambda_2-\mu_2)\lambda_2^{-1}S(A_{-1}-(\lambda_1-\mu_1)\lambda_1^{-1}A_{-1}Q).
\end{split}
\]

If the matrix function admits a canonical factorization $A(z)=U(z)L(z^{-1})$ then we can prove that also the double shifted function $\wt A(z)$ admits a canonical factorization. This result is synthesized in the next theorem.

\begin{thm}
Under the assumptions of Theorem \ref{th:ds},
if $A(z)$ admits a canonical factorization $A(z)=U(z)L(z^{-1})$, with $U(z)=\sum_{i=0}^\infty z^iU_i$,  $L(z)=\sum_{i=0}^\infty z^iL_i$,
and if $|\lambda_1|,|\mu_1|<1$, $|\lambda_2|, |\mu_2|>1$, then $\wt A(z)$ has the canonical factorization $\wt A(z)=\wt U(z)\wt L(z^{-1})$, with
$\wt U(z)=\sum_{i=0}^\infty z^i\wt U_i$,  $\wt L(z)=\sum_{i=0}^\infty z^i \wt L_i$, where
\[\begin{split}
& \wt L_0=L_0,~~\wt L_i=L_i-(\lambda_1-\mu_1) \sum_{j=1}^\infty \lambda_1^{-j}L_{j+i-1}Q\quad i\ge 1,\\
& \wt U_i=U_i+(\lambda_2-\mu_2)\sum_{j=0}^{\infty}\lambda_2^j S U_{i+j+1}, \quad i\ge 0.
\end{split}
\]
\end{thm}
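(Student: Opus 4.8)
The plan is to combine the product form \eqref{eq:atds} of the double shift with the given canonical factorization $A(z)=U(z)L(z^{-1})$ and then to recognize the two outer factors as, respectively, a right shift acting on $L$ and a left shift acting on $U$. Substituting $A(z)=U(z)L(z^{-1})$ into \eqref{eq:atds} and regrouping yields $\wt A(z)=\wt U(z)\wt L(z^{-1})$ with
\[
\wt U(z):=\left(I+\frac{\lambda_2-\mu_2}{z-\lambda_2}S\right)U(z),\qquad
\wt L(z^{-1}):=L(z^{-1})\left(I+\frac{\lambda_1-\mu_1}{z-\lambda_1}Q\right),
\]
so the factorization identity $\wt A(z)=\wt U(z)\wt L(z^{-1})$ is immediate. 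The work is to show that $\wt U$ is a power series in $z$ that is analytic and invertible for $|z|\le 1$, that $\wt L$ is a power series in $z$ that is analytic and invertible for $|z|\le 1$, and to read off their coefficients; then $\wt A(z)=\wt U(z)\wt L(z^{-1})$ is a canonical factorization in the sense of \eqref{eq:cf}.

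For $\wt L$ I would invoke Theorem~\ref{th:rightcan} directly. Since $A(\lambda_1)u=0$ and $|\lambda_1|<1$, the factor $U(\lambda_1)$ is nonsingular, hence $L(\lambda_1^{-1})u=0$, and $\wt L(z^{-1})=L(z^{-1})(I+\frac{\lambda_1-\mu_1}{z-\lambda_1}Q)$ is precisely the object analyzed there (it coincides with the factor attached to $\wh A(z)=A(z)(I+\frac{\lambda_1-\mu_1}{z-\lambda_1}Q)$). Theorem~\ref{th:rightcan} then gives $\wt L_0=L_0$, $\wt L_i=L_i-(\lambda_1-\mu_1)\sum_{j=1}^\infty\lambda_1^{-j}L_{j+i-1}Q$ for $i\ge 1$, and $\det\wt L(z^{-1})=\det L(z^{-1})\frac{z-\mu_1}{z-\lambda_1}$; equivalently $\det\wt L(z)=\det L(z)\frac{1-\mu_1 z}{1-\lambda_1 z}$, which is nonzero on $|z|\le 1$ since $|\lambda_1|,|\mu_1|<1$. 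So $\wt L$ has the required properties and coefficients.

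For $\wt U$ I would run the left-shift argument of Theorem~\ref{th:ls} on the power series $U(z)$ (equivalently, the right-shift argument of Theorem~\ref{th:matpower} applied to $U(z)^T$). First, from $v^*A(\lambda_2)=0$ and $A(\lambda_2)=U(\lambda_2)L(\lambda_2^{-1})$ with $L(\lambda_2^{-1})$ nonsingular (as $|\lambda_2^{-1}|<1$), one obtains $v^*U(\lambda_2)=0$. Writing $\wt U(z)=U(z)+\frac{\lambda_2-\mu_2}{z-\lambda_2}\,y\,(v^*U(z))$, the vanishing $v^*U(\lambda_2)=0$ cancels the apparent pole at $z=\lambda_2$, so $\wt U$ is analytic wherever $U$ is; and $\wt U$ has no negative powers because $U$ has none. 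Expanding $v^*(U(z)-U(\lambda_2))$ and reordering the double sum as in the proof of Theorem~\ref{th:matpower} — the bound $|U_i|\le\Gamma\sigma^{-i}$ valid for any $|\lambda_2|<\sigma<r_2$ (recall $r_1<|\lambda_2|<r_2$ and that $U$ inherits the analyticity of $A$ on $|z|<r_2$) justifies the convergence of $\sum_j\lambda_2^jU_{i+j+1}$ — gives $\wt U(z)=\sum_{i=0}^\infty z^i\wt U_i$ with $\wt U_i=U_i+(\lambda_2-\mu_2)\sum_{j=0}^\infty\lambda_2^jSU_{i+j+1}$. Finally $\det\wt U(z)=\det U(z)\bigl(1+\frac{\lambda_2-\mu_2}{z-\lambda_2}v^*y\bigr)=\det U(z)\frac{z-\mu_2}{z-\lambda_2}$, which is nonzero on $|z|\le 1$ since $|\lambda_2|,|\mu_2|>1$. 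Hence $\wt U$ has the required properties and coefficients, and $\wt A(z)=\wt U(z)\wt L(z^{-1})$ is the asserted canonical factorization; multiplying the two determinant formulas recovers the identity $\det\wt A(z)=\det A(z)\frac{z-\mu_1}{z-\lambda_1}\frac{z-\mu_2}{z-\lambda_2}$ of Theorem~\ref{th:ds}.

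The step I expect to be the main obstacle is checking that $\wt U$ stays in the ``plus'' class — that left multiplying $U$ by $I+\frac{\lambda_2-\mu_2}{z-\lambda_2}S$ still produces a power series that is analytic on a disk strictly larger than $|z|\le 1$ and invertible there. This is exactly where $|\lambda_2|,|\mu_2|>1$ enter: they force the determinant ratio $\frac{z-\mu_2}{z-\lambda_2}$ to be analytic and nonvanishing on the closed unit disk; the analyticity of $U$ out to radius $r_2>|\lambda_2|$ is what makes the series defining $\wt U_i$ converge; and $v^*U(\lambda_2)=0$ is what removes the spurious pole at $\lambda_2$. This is the mirror image of the role of $|\lambda_1|,|\mu_1|<1$ for $\wt L$, which is already packaged in Theorem~\ref{th:rightcan}.
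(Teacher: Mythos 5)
Your proposal is correct and follows essentially the same route as the paper: substitute $A(z)=U(z)L(z^{-1})$ into \eqref{eq:atds}, observe that $v^*U(\lambda_2)=0$ and $L(\lambda_1^{-1})u=0$ because the other factor is nonsingular at the relevant point, and then read off $\wt U$ and $\wt L$ as a left shift of $U(z)$ and a right shift of $L(z^{-1})$ via Theorems~\ref{th:ls} and \ref{th:matlaurent} (equivalently \ref{th:rightcan}). Your additional determinant checks of invertibility on $|z|\le 1$ are a welcome explicit verification of the canonical-factorization property that the paper leaves implicit.
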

\begin{proof} Replacing
 $A(z)$ with $U(z)L(z^{-1})$ in equation \eqref{eq:atds} yields the
  factorization $\wt A(z)=\wt U(z)\wt L(z^{-1})$ where $\wt
  U(z)=(I+\frac{\lambda_2-\mu_2}{z-\lambda_2}S)U(z)$, and $\wt
  L(z^{-1})=L(z^{-1})(I+\frac{\lambda_1-\mu_1}{z-\lambda_1}Q)$. Since
  $v^*U(\lambda_2)=0$ and $L(\lambda_1^{-1})u=0$, we have that $\wt
  U(z)$ and $\wt L(z)$ are obtained by applying the left shift and the
  right shift to $U(z)$ and $L(z^{-1})$ respectively. Thus the
  expressions for $\wt U(z)$ and $\wt L(z)$ follow from Theorems
  \ref{th:ls} and \ref{th:matlaurent}, respectively.
\end{proof}

\subsection{Shifting eigenvalues from/to infinity}
Recall that for a matrix polynomial having a singular leading
coefficient, some eigenvalues are at  infinity.  These eigenvalues
may cause numerical difficulties in certain algorithms for the
polynomial eigenvalue problem.  In other situations, having eigenvalues
far from the unit circle, possibly at  infinity, may increase the
convergence speed of certain iterative algorithms.  With the shift
technique it is possible to move eigenvalues from/to  infinity in
order to overcome the difficulties encountered in solving specific
problems.  In this section we examine this possibility.

We restrict our attention to the case of matrix polynomials. Assume we are given $A(z)=\sum_{i=0}^d z^iA_i$ where $A_d$ is singular, and a vector $u\ne 0$ such that $A_du=0$. Consider the reverse polynomial
$A_{\mathcal R}(z)=\sum_{i=0}^d z^iA_{d-i}$,  observe that $A_{\mathcal R}(0)u=A_du=0$ and that the eigenvalues of $A_{\mathcal R}(z)$ are the reciprocals of the eigenvalues of $A(z)$, with the convention that $1/0=\infty$ and $1/\infty=0$.
Apply to $A_{\mathcal R}(z)$ a right shift of $\lambda=0$ to a value $\sigma\ne 0$, and obtain a shifted polynomial $\wt A_{\mathcal R}(z)=A_{\mathcal R}(z)(I-z^{-1}\sigma Q)$, with $Q=uv^*$, $v^*u=1$, having the same eigenvalues of $A_{\mathcal R}(z)$ except for the null eigenvalue which is replaced by $\sigma$. Revert again the coefficients of the polynomial $\wt A_{\mathcal R}(z)$ and obtain a new matrix polynomial $\wt A(z)=\sum_{i=0}^d z^i\wt A_i=z^dA_{\mathcal R}(z^{-1})=A(z)(I-z\sigma Q)$. This polynomial has the same eigenvalues of $A(z)$ except for the eigenvalue at  infinity which is replaced by $\mu=\sigma^{-1}$.
A direct analysis shows that the coefficients of $\wt A(z)$ are given by
\begin{equation}\label{eq:frominf}
\begin{split}
&\wt A_0=A_0,\\
&\wt A_i=A_i-\mu^{-1}A_{i-1}Q,\quad i=1,\ldots,d.
\end{split}
\end{equation}
A similar formula can be obtained for the left shift.

If the kernel of $A_d$ has dimension $k\ge 1$ and we know a basis of the kernel, then we may apply the above shifting technique $k$ times in order to remove $k$  eigenvalues at infinity.  This transformation does not guarantee that the leading coefficient of $\wt A(z)$ is nonsingular. However, if $\det \wt A_d=0$ we may repeat the shift.

An example of this transformation is given by the following quadratic matrix polynomial
\[
A(z)=z^2\begin{bmatrix}
0& 0& 0 \\
0& 1& 0 \\
0& 0& 1 \\
\end{bmatrix}+z
\begin{bmatrix}
0& 1& 1 \\
0& 1& 1 \\
0& 1& 1 \\
\end{bmatrix}
+\begin{bmatrix}
1& 0& -1 \\
1& 2& 0 \\
1& 1& 1
\end{bmatrix},
\]
which has two eigenvalues at infinity, and 4 eigenvalues equal to $\pm {\bf i}\sqrt 3$, $\pm {\bf i}$, where ${\bf i}$ is the imaginary unit.  The Matlab function {\tt polyeig} applied to this polynomial yields the following approximations to the eigenvalues
\begin{verbatim}
                        Inf + 0.000000000000000e+00i
      1.450878662283769e-15 + 1.732050807568876e+00i
      1.450878662283769e-15 - 1.732050807568876e+00i
     -8.679677645252271e-17 + 9.999999999999998e-01i
     -8.679677645252271e-17 - 9.999999999999998e-01i
                        Inf + 0.000000000000000e+00i.
\end{verbatim}
Since $e_1=(1,0,0)^*$ is such that $A_2e_1=0$, we may apply \eqref{eq:frominf} with $\mu=1$ and $Q=e_1e_1^*$
yielding the transformed matrix polynomial
\[
z^2\begin{bmatrix}
0& 0& 0 \\
0& 1& 0 \\
0& 0& 1 \\
\end{bmatrix}+z
\begin{bmatrix}
-1& 1& 1 \\
-1& 1& 1 \\
-1& 1& 1 \\
\end{bmatrix}
+\begin{bmatrix}
1& 0& -1 \\
1& 2& 0 \\
1& 1& 1
\end{bmatrix},
\]
which still has the same eigenvalues of $A(z)$ except one eigenvalue at infinity which is replaced by 1. Since $e_1$ is still in the kernel of the leading coefficient, we may perform again the shift to the above matrix polynomial with $Q=e_1e_1^*$ and $\mu=1/2$ yielding the matrix polynomial
\[
z^2\begin{bmatrix}
2&0&0\\2&1&0\\
2&0&1
\end{bmatrix}+
z\begin{bmatrix}
-3&1&1\\
-3&1&1\\
-3&1&1
\end{bmatrix}+
\begin{bmatrix}
1&0&-1\\
1&2&0\\
1&1&1
\end{bmatrix},
\] 
which has no eigenvalues at infinity.
Applying again {\tt polyeig} to this latter polynomial yields
\begin{verbatim}
      5.000000000000000e-01 + 0.000000000000000e+00i
      9.999999999999993e-01 + 0.000000000000000e+00i
     -1.456365117861229e-16 + 1.732050807568876e+00i
     -1.456365117861229e-16 - 1.732050807568876e+00i
     -4.885759970651636e-17 + 9.999999999999996e-01i
     -4.885759970651636e-17 - 9.999999999999996e-01i.
\end{verbatim}
It is evident that eigenvalues $\pm{\bf i}$ and $\pm {\bf i}\sqrt 3$ are left unchanged, while the two eigenvalues of the original polynomial at  infinity are replaced by 1 and by $1/2$. 

A similar technique can be applied to shift a finite eigenvalue of $A(z)$ to infinity. Assume that $A(\lambda)u=0$ for $u\ne 0$ where $\lambda\ne 0$. Consider the reversed polynomial $A_{\mathcal R}(z)=z^dA(z^{-1})$ and apply the right shift to $A_{\mathcal R}(z)$ to move $\lambda^{-1}$ to $0$.
This way we obtain the polynomial $\wt A_{\mathcal R}(z)=A_{\mathcal R}(z)(I+\frac{\lambda^{-1}}{z-\lambda^{-1}}Q)$, $Q=uv^*$, with $v$ any vector such that $v^*u=1$.
Revert again the coefficients of the polynomial $\wt A_{\mathcal R}(z)$ and obtain a new matrix polynomial $\wt A(z)=\sum_{i=0}^d z^i\wt A_i=z^dA_{\mathcal R}(z^{-1})=A(z)(I+\frac{\lambda^{-1}}{z^{-1}-\lambda^{-1}} Q)$. The equations which relate the coefficients of $A(z)$ and $\wt A(z)$ can be obtained by reverting the coefficients of $\wt A_{\mathcal R}(z)$ and relying on Theorem \ref{th:matpol}. This way we get
\begin{equation}\label{eq:toinf}
\begin{split}
&\wt A_0=A_0,\\
&\wt A_i=A_i+\lambda^{-1}\sum_{k=0}^{i-1}\lambda^{-k}A_{i-k+1}Q,\quad i=1,\ldots,d.
\end{split}
\end{equation}
A similar expression can be obtained with the left shift.
In the transformed matrix polynomial, the eigenvalue $\lambda$ is shifted to infinity.

\subsection{Palindromic polynomials}
Consider a $*$-palindromic matrix polynomial $A(z)=\sum_{i=0}^d z^i A_i$, that is,  such that $A_i=A_{d-i}^*$  for $i=0,\ldots,d$, and observe that $A(z)^*=\bar z^dA(\bar z^{-1})$. This way, one has
 $A(\lambda)u=0$ for some $\lambda\in\mathbb C$ and some vector $u\ne 0$, if and only if $u^*A(\bar\lambda^{-1})=0$. Thus the eigenvalues of $A(z)$ come into pairs $(\lambda, \bar\lambda^{-1})$,
 where we assume that $1/0=\infty$ and $1/\infty=0$.
 
 If we are given $\lambda$ and $u$ such that $A(\lambda)u=0$, we can apply the right shift to move $\lambda$ to some value $\mu$ and the left shift to the polynomial $A(z)^*$, thus shifting $\bar\lambda$ to $\bar\mu$.
For $\mu\in\mathbb C$, the functional expression of this double shift is
\[
 \wt A(z)=\left(I+\frac{\bar\lambda-\bar\mu}{ z^{-1}-\bar\lambda}Q\right)A(z)\left(I+\frac{\lambda-\mu}{z-\lambda}Q\right),
\]
where $Q=u u^*/(u^*u)$.
We may easily check that $\wt A(z)^*=\bar z^d\wt A(\bar z^{-1})$, i.e., $\wt A(z)$ is $*$-palindromic. Moreover, one has
\[
\det 
\wt A(z)=\left(1+\frac{\bar\lambda-\bar\mu}{ z^{-1}-\bar\lambda}\right)\det A(z)\left(1+\frac{\lambda-\mu}{z-\lambda}\right)=\det A(z)\frac{(z-\mu)(1-\bar\mu z)}{(z-\lambda)(1-\bar\lambda z)},
\]
hence the pair of eigenvalues $(\lambda, \bar\lambda^{-1})$ is moved to the pair $(\mu, \bar\mu^{-1})$.
The coefficients of the shifted matrix polynomial $\wt A(z)=\sum_{i=0}^d \wt z^iA_i$
are given by
\[
\wt A_0=\wh A_0,\quad\wt A_i=\wh A_i+(\bar \lambda-\bar \mu)\sum_{k=0}^{i-1}\bar\lambda^k Q \wh A_{i-k-1},\quad i=1,\ldots,d,
\]
where
\[
\wh A_d= A_d,\quad
\wh A_j=A_j+(\lambda-\mu)\sum_{k=0}^{d-j-1}\lambda^k A_{k+j+1}Q,\quad j=0,\ldots,d-1.
\]
In the particular case where $\lambda=0$, the expression for the coefficients simplifies to
\[
\wt A_i=A_i-\mu A_{i+1}Q-\bar \mu Q  A_{i-1}+|\mu|^2QA_iQ,\quad i=0,\ldots,d,
\]
with the convention that $A_{-1}=A_{d+1}=0$.

In the specific case where $A(z)$ is a matrix polynomial of degree 2, the coefficients of $\wt A(z)$ have the simpler expression:
\[
\begin{split}
& \wt A_0=A_0+(\lambda-\mu)(A_1+\lambda A_2)Q,\\
&\wt A_1=A_1+(\lambda-\mu)A_2 Q + (\bar\lambda-\bar\mu)Q A_0+|\lambda-\mu|^2 Q(A_1+\lambda A_2)Q,\\
&\wt A_2=\wt A_0^*.
\end{split}
\]
In the above formulas, since $(A_1+\lambda A_2)Q=-\lambda^{-1}A_0 Q$ and $Q(A_1+\bar\lambda A_0)=-\bar\lambda^{-1}QA_2$, we may easily check that $\wt A_1^*=\wt A_1$.

 Concerning canonical factorizations, we may prove that if $d=2m$ and
 \[
 z^{-m}A(z)=U(z)L(z^{-1}), \quad U( z)=L(\bar z)^*,
 \]
 is a canonical factorization of $z^{-m}A(z)$, and if $|\lambda|<1, |\mu|<1$, then the shifted function $z^{-m}\wt A(z)$ has the canonical factorization 
 \[
 z^{-m}\wt A(z)=\wt U(z)\wt L(z^{-1}), \quad \wt U( z)=\wt L(\bar z)^*,\quad 
 \wt L(z^{-1})=L(z^{-1})(I+\frac{\lambda-\mu}{z-\lambda}Q).
 \]

\subsection{Quadratic matrix polynomials and matrix equations}\label{sec:quad}
Recall that \cite{blm:book} a matrix Laurent polynomial of the kind $A(z)=z^{-1}A_{-1}+A_0+zA_1$ has a canonical factorization
\[
A(z)=(I-zR_+)K_+(I-z^{-1}G_+)
\]
if and only if the matrices $R_+$ and $G_+$ are the solutions of the matrix equations
\[
X^2A_{-1}+XA_0+A_1=0,\quad A_{-1}+A_0X+A_1X^2=0,
\]
respectively, with spectral radius less than 1.
From Theorem \ref{thm:crspr} it follows that the matrices $\wt R_+=R_+$ and $\wt G_+=G_++(\lambda-\mu) Q$
 are the solutions of spectral radius less than 1 to the following matrix equations
\[
X^2\wt A_{-1}+X\wt A_0+\wt A_1=0,\quad \wt A_{-1}+\wt A_0X+\wt A_1X^2=0,
\]
respectively, where $\wt A(z)=A(z)(I+\frac{\lambda-\mu}{z-\lambda}Q)$ is the matrix Laurent polynomial obtained with the right shift with $|\lambda|,|\mu|<1$.

If in addition $A(z^{-1})$ has the canonical factorization
\[
A(z^{-1})=(I-zR_-)K_-(I-z^{-1}G_-),
\]
and the assumptions of Theorem \ref{thm:crspr} are satisfied, then  the matrices 
$\wt R_-$ and $\wt G_-$ defined in Theorem \ref{thm:crspr}
are the solution of spectral radius less than 1 to the matrix equations
 \[
X^2\wt A_{1}+X\wt A_0+\wt A_{-1}=0,\quad \wt A_{1}+\wt A_0X+\wt A_{-1}X^2=0,
\]
respectively.

For a general matrix polynomial $A(z)=\sum_{i=0}^d z^i A_i$ the existence of the canonical factorization $z^{-1}A(z)=U(z)(I-z^{-1}G)$ implies that $G$ is the solution of minimal spectral radius of the equation
\[
\sum_{i=0}^d A_iX^i=0.
\] 
Let $\lambda$ be an eigenvalue of $A(z)$ of modulus less than 1 such that $A(\lambda)u=0$, $u\in\mathbb C^n$, $u\ne 0$ and let $\mu\in\mathbb C$ be such that $|\mu|<1$. Consider the polynomial $\wt A(z)$ defined in Theorem \ref{th:matpol} obtained by means of the right shift. According to Theorem \ref{th:rightcan} the function $z^{-1}\wt A(z)$ has the canonical factorization $z^{-1}\wt A(z)=U(z)(I-z^{-1}\wt G)$ where $\wt G=G-(\lambda-\mu)Q$. Therefore the matrix $\wt G$ is the solution of minimal spectral radius of the matrix equation
\[
\sum_{i=0}^d \wt A_iX^i=0.
\]

As an example of application of this result, consider the $n\times n$ matrix polynomial $A(z)=zI-B(z)$ where $B(z)=\sum_{i=0}^dz^iB_i $ with nonnegative coefficients such that $(\sum_{i=0}^d B_i)e=e$, $e=(1,\ldots,1)^*$.
Choose $n=5$, $d=4$ and construct the coefficients $B_i$ as follows. Let $E=(e_{i,j})$, $e_{i,j}=1$, $T=(t_{i,j})$ be the upper triangular matrix such that $t_{i,j}=1$ for $i\le j$, $D=\hbox{diag}(57,49,41,33,25)$, 
set
$B_0=9D^{-1}T$, $B_1=D^{-1}T^*$, $B_2=D^{-1}E$, $B_3=D^{-1}E$, $B_4=D^{-1}$.
Then $B(1)e=e$  so that $A(1)e=0$. The matrix polynomial $A(z)$ has 5 eigenvalues of modulus less than or equal to 1, that is
\begin{verbatim}
   0.15140 - 0.01978i
   0.15140 + 0.01978i
   0.20936 - 0.09002i
   0.20936 + 0.09002i
   1.00000 + 0.00000i
\end{verbatim}
moreover, the eigenvalue of smallest modulus among those which lie outside the unit disk is {\tt 1.01258}. 
Applying cyclic reduction to approximate the minimal solution of the matrix equation $\sum_{i=0}^4A_iX^i=0$, relying on the software of \cite{bmsv}, requires 12 iterations. Applying the same method with the same software to the equation $\sum_{i=0}^4\wt A_iX^i=0$ obtained by shifting the eigenvalue 1 to zero provides the solution in just 6 iterations. In fact the approximation error at step $k$ goes to zero as $\sigma^{2^k}$ where $\sigma$ is the ratio between the eigenvalue of largest modulus, among those in the unit disk, and the eigenvalue of smallest modulus, among those out of the unit disk. This ratio is {\tt 0.98758}
for the original polynomial and {\tt 0.20676} for the polynomial obtained after the shift. This explains the difference in the number of iterations.

\section{Conclusions and open problems}
By means of a functional interpretation, we have extended Brauer's theorem to matrix (Laurent) polynomials, matrix (Laurent) power series and have related the canonical factorizations of these matrix functions to the corresponding factorizations of the shifted functions.

One point that has not been treated in this article is the analysis of the Jordan chains of a matrix polynomial or matrix power series under the right/left or double shift. This is the subject of next investigation.

Another interesting issue which deserves further study is the formal analysis of the conditioning of the eigenvalues of a matrix polynomial under the shift, together with the analysis of the conditioning of the shift operation itself and of the numerical stability of the algorithms for its implementation.

\bibliographystyle{abbrv}

\end{document}